\documentclass[a4paper,12pt]{article}
\usepackage{cmap}                        
\usepackage[cp1251]{inputenc}            
\usepackage[english]{babel}
\usepackage[left=2cm,right=2cm,top=2cm,bottom=2cm]{geometry} 
\usepackage{amsmath, amsthm, amscd, amsfonts, amssymb, graphicx, color}

\newtheorem{thm}{Theorem}[section]
\newtheorem{cor}[thm]{Corollary}
\newtheorem{lem}[thm]{Lemma}
\newtheorem{prop}[thm]{Proposition}
\newtheorem{defn}[thm]{Definition}

\numberwithin{equation}{section}

\begin{document}

\begin{center}\Large
\textbf{Generalized Fitting subgroups of finite groups}
\normalsize

\medskip

V.I. Murashka, A.F. Vasil'ev 

\bigskip

\textbf{Abstract} \end{center}

In this paper we consider the Fitting subgroup $F(G)$ of a finite group $G$ and its
 generalizations: the quasinilpotent radical $F^*(G)$ and the generalized Fitting
 subgroup $\tilde{F}(G)$ defined by
 $\tilde{F}(G)\supseteq \Phi(G)$ and $\tilde{F}(G)/\Phi(G)=Soc(G/\Phi(G))$.
 We sum up known properties of $\tilde{F}(G)$ and suggest some new ones. 
  Let $R$ be a subgroup of a group $G$. We shall call a subgroup $H$ of $G$
 the $R$-subnormal subgroup if $H$ is subnormal in $ \langle H,R\rangle$.
 In this work the influence of $R$-subnormal subgroups (maximal, Sylow, cyclic primary) on
 the structure of finite groups are studied in the case when $R\in\{F(G), F^*(G),\tilde{F}(G)\}$.

\textbf{Keywords:} Finite group, the Fitting subgroup, the quasinilpotent radical,
 the generalized Fitting subgroup, subnormal subgroup, nilpotent group, 
supersoluble group

\textbf{Mathematic Subject Classification (2010)}: 20D15, 20D20, 20D25, 20D35, 20E28.

\section{\bf Introduction}
\vskip 0.4 true cm

All the considered groups are finite. In 1938 H. Fitting \cite{Fit} showed that a product of
 two normal nilpotent subgroups is again nilpotent subgroup. It means that in every group
 there is the unique maximal normal nilpotent subgroup $F(G)$ called the Fitting subgroup.
 This  subgroup has a great influence on the structure of a solvable group. For example
 Ramadan \cite{35} proved
the following theorem:

\begin{thm} Let $G$ be a soluble group. If all maximal subgroups of Sylow subgroups of
 $F(G)$ are normal in $G$ then $G$ is supersoluble.
\end{thm}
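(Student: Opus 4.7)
The approach is a minimal counterexample argument: take $G$ of minimum order satisfying the hypothesis but not supersoluble, and derive a contradiction by showing every chief factor of $G$ must in fact be cyclic, which characterises supersolubility among soluble groups. The first step is to reduce to $\Phi(G)=1$. Since supersoluble groups form a Frattini-closed class, it is enough to verify that the hypothesis descends to $\overline{G}=G/\Phi(G)$. For soluble $G$, Gasch\"utz's theorem gives $F(\overline{G})=F(G)/\Phi(G)$, whose Sylow $p$-subgroup is $P\Phi(G)/\Phi(G)$, where $P$ is the Sylow $p$-subgroup of $F(G)$. A maximal subgroup of $P\Phi(G)/\Phi(G)$ lifts to $H\Phi(G)/\Phi(G)$ with $H$ a maximal subgroup of $P$ containing $P\cap\Phi(G)$; by hypothesis $H\trianglelefteq G$, so its image is normal in $\overline{G}$. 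Minimality then makes $\overline{G}$ supersoluble, hence $G$ supersoluble, a contradiction. So $\Phi(G)=1$.

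With $\Phi(G)=1$, $F(G)=\operatorname{Soc}(G)$ is a direct product of elementary abelian minimal normal subgroups, so $F(G)$ is abelian and each of its Sylow $p_i$-subgroups $P_i$ is an $\mathbb{F}_{p_i}$-vector space. Solubility then forces $C_G(F(G))=F(G)$. The hypothesis says every hyperplane of $P_i$ is $G$-invariant; a short linear-algebra argument (every nonzero $v\in P_i$ is the intersection of the hyperplanes through it, so $gv\in\langle v\rangle$ for all $g\in G$) forces $G$ to act on $P_i$ by scalars. The image of $G$ in $GL(P_i)$ is therefore a subgroup of $\mathbb{F}_{p_i}^{\times}$, and consequently $G/F(G)$ embeds in $\prod_i \mathbb{F}_{p_i}^{\times}$ and is abelian.

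Now refine a chief series of $G$ through $F(G)$. The chief factors inside $F(G)$ are $G$-invariant lines in some $P_i$, hence cyclic of order $p_i$; the chief factors above $F(G)$ are chief factors of the abelian group $G/F(G)$, so also of prime order. Every chief factor of $G$ is therefore cyclic, so $G$ is supersoluble, contradicting the choice of $G$.

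The main obstacle is the Frattini reduction: one has to check carefully that the correspondence between maximal subgroups of $P\Phi(G)/\Phi(G)$ and maximal subgroups of $P$ containing $P\cap\Phi(G)$ preserves the normality hypothesis, and to invoke the Gasch\"utz identity $F(G/\Phi(G))=F(G)/\Phi(G)$ for soluble $G$. Once those points are secured, the remaining structural steps rest only on the standard inclusion $C_G(F(G))\leq F(G)$ in soluble groups and the observation that an action fixing every hyperplane is by scalars.
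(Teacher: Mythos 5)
The paper itself contains no proof of this statement: it is Ramadan's theorem, quoted in the introduction from \cite{35} purely as motivation for what follows, so there is no internal argument to compare yours against. Judged on its own merits, your proof is correct and self-contained. The Frattini reduction is sound: $F(G/\Phi(G))=F(G)/\Phi(G)$ is the Gasch\"utz identity (valid for all finite groups, so solubility is not even needed at that point); the Sylow $p$-subgroup of $F(G)/\Phi(G)$ is indeed $P\Phi(G)/\Phi(G)$, since $P\cap\Phi(G)$ is the Sylow $p$-subgroup of the nilpotent group $\Phi(G)$; and a short Dedekind-law computation confirms the bijection you invoke between maximal subgroups of $P\Phi(G)/\Phi(G)$ and maximal subgroups $H$ of $P$ with $H\supseteq P\cap\Phi(G)$, which transports the normality hypothesis to the quotient. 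Saturation of the supersoluble formation then closes the reduction, so $\Phi(G)=1$ in a minimal counterexample.

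The second half is also correct: with $\Phi(G)=1$ and $G$ soluble, $F(G)=Soc(G)$ is a direct product of elementary abelian minimal normal subgroups, $C_G(F(G))=F(G)$, and your hyperplane argument does force scalar action on each $P_i$. One small point worth making explicit: when $\dim P_i=1$ there are no hyperplanes through a nonzero $v$, so that case must be set aside as trivial (the automorphism group of a group of order $p_i$ already consists of scalars); for $\dim P_i\geq 2$ the intersection of the invariant hyperplanes through $v$ is $\langle v\rangle$, every line is fixed, and a linear map fixing every line is scalar. From there, every subgroup of $F(G)$ is normal in $G$, $G/F(G)\hookrightarrow\prod_i\mathbb{F}_{p_i}^{\times}$ is abelian, and a chief series through $F(G)$ has all factors of prime order, so $G$ is supersoluble. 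Note finally that your minimal-counterexample wrapper is dispensable: the case $\Phi(G)=1$ is proved directly, and an induction on $|G|$ through the Frattini quotient (or even a direct appeal to saturation) gives the general case without assuming a counterexample exists.
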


  Analyzing proofs of such kind's theorems in solvable case one can note that the
 following properties of the Fitting subgroup $F(G)$  are often used:

 (1) $C_G(F(G))\subseteq F(G)$;

 (2) $\Phi(G)\subseteq F(G)$ and $F(G/\Phi(G))=F(G)/\Phi(G)$;

 (3) $F(G)/\Phi(G)\leq Soc(G/\Phi(G))$.

  But  for the Fitting subgroup of an arbitrary group holds only (2) and (3). Note that there are
 many groups $G$ with $F(G)=1$.   That is why there were attempts to generalize the Fitting subgroup.

   In 1970 H. Bender \cite{18} introduced  the quasinilpotent radical $F^*(G)$. It can be defined by
 the formula $F^*(G)/F(G)=Soc(C_G(F(G))F(G)/F(G))$ and   can be viewed  as a generalization of the
 Fitting subgroup. For $F^*(G)$ the statements like (1) and (3)  holds. This subgroup proved useful
 in the classification of finite simple groups. Also $F^*(G)$ was used by many authors  in the study
 of nonsimple  groups (for example see \cite{37},\cite{38} and \cite{45}, etc.).

   In 1985  P. Forster \cite{F2} showed that  there is the unique characteristic subgroup
 $\tilde{F}(G)$ ($F'(G)$ in Forster notation) in every  group $G$  which satisfies the statements
 like (1)-(3). This subgroup can be defined by

 (1)  $\Phi(G)\subseteq \tilde{F}(G)$;

 (2) $\tilde{F}(G)/\Phi(G)=Soc(G/\Phi(G))$.

Firstly subgroup with this properties was mentioned by P. Shmid \cite{17} in 1972.  It was defined
 in explicit form  by L. Shemetkov in 1978 (see \cite{13}, p.79). P. Shmid and L. Shemetkov used this
 subgroup in the study of stable groups of automorphisms for groups.
  In \cite{33} A. Vasil'ev and etc. proved the following theorem.

  \begin{thm} The intersection of all maximal subgroups $M$ of a group $G$ such that $M\tilde{F}(G)=G$
 is the Frattini subgroup $\Phi(G)$
 of $G$.\end{thm}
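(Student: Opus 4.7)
Let $D$ denote the intersection appearing in the statement. I would prove the two inclusions $\Phi(G)\subseteq D$ and $D\subseteq\Phi(G)$ separately. The first is immediate: $\Phi(G)$ lies in every maximal subgroup of $G$, hence in any intersection of maximal subgroups.

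For the reverse inclusion, the plan is to pass to $\bar{G}=G/\Phi(G)$ and show that $\bar{D}:=D/\Phi(G)$ is trivial. Two observations will drive the argument. First, $\Phi(\bar{G})=1$, because the maximal subgroups of $\bar{G}$ are precisely the quotients $M/\Phi(G)$ with $M$ maximal in $G$, so
\[
\Phi(\bar{G})=\bigcap_{M\text{ max in }G} M/\Phi(G)=\Phi(G)/\Phi(G)=1.
\]
Second, by the defining property of $\tilde{F}(G)$ we have $\tilde{F}(G)/\Phi(G)=Soc(\bar{G})$, and the condition $M\tilde{F}(G)=G$ translates via the correspondence theorem into $\bar{M}\cdot Soc(\bar{G})=\bar{G}$. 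Thus $\bar{D}$ is the intersection of all maximal subgroups $\bar{M}$ of $\bar{G}$ that supplement $Soc(\bar{G})$. Because $Soc(\bar{G})$ is characteristic, this family is closed under conjugation, so $\bar{D}$ is normal in $\bar{G}$.

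Now I would argue by contradiction: suppose $\bar{D}\neq 1$ and choose a minimal normal subgroup $L$ of $\bar{G}$ contained in $\bar{D}$. Then $L\subseteq Soc(\bar{G})$. Since $\Phi(\bar{G})=1$, the subgroup $L$ cannot lie in every maximal of $\bar{G}$, so there exists a maximal $\bar{M}$ with $L\not\subseteq\bar{M}$. Normality of $L$ together with maximality of $\bar{M}$ forces $\bar{M}L=\bar{G}$, and since $L\subseteq Soc(\bar{G})$ we deduce $\bar{M}\cdot Soc(\bar{G})=\bar{G}$. Hence $\bar{M}$ is one of the subgroups whose intersection defines $\bar{D}$, giving $L\subseteq\bar{D}\subseteq\bar{M}$, which contradicts the choice of $\bar{M}$. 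Therefore $\bar{D}=1$, equivalently $D\subseteq\Phi(G)$.

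The key idea is recognising $\bar{D}$ as a normal subgroup of $\bar{G}$ and exploiting $\Phi(\bar{G})=1$ to find, for every minimal normal subgroup of $\bar{G}$ inside $Soc(\bar{G})$, a maximal subgroup avoiding it. I do not anticipate a serious obstacle; the only point requiring care is the faithful translation between maximal subgroups of $G$ containing $\Phi(G)$ and maximal subgroups of $G/\Phi(G)$, and the corresponding translation of the condition $M\tilde{F}(G)=G$.
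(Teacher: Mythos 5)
Your proof is correct. Note first that the paper itself contains no proof of this statement: Theorem 1.2 is quoted in the introduction from the note of Vasil'ev, Vasil'eva and Syrokvashin \cite{33}, so there is no internal argument to compare yours against; your proposal stands as a self-contained and elementary derivation. The three load-bearing steps all check out: (i) the reduction modulo $\Phi(G)$ is faithful, since every maximal subgroup of $G$ contains $\Phi(G)$, and because both $M$ and $\tilde{F}(G)$ contain $\Phi(G)$ the condition $M\tilde{F}(G)=G$ is genuinely equivalent to $\bar{M}\,Soc(\bar{G})=\bar{G}$ by the defining property $\tilde{F}(G)/\Phi(G)=Soc(G/\Phi(G))$; (ii) the family of supplementing maximal subgroups is conjugation-invariant (conjugates of maximal subgroups are maximal, and $Soc(\bar{G})$ is normal), so $\bar{D}$ is normal and, if nontrivial, contains a minimal normal subgroup $L$ of $\bar{G}$; (iii) since $L\leq Soc(\bar{G})$ and $\Phi(\bar{G})=1$, some maximal $\bar{M}$ avoids $L$, and normality of $L$ forces $\bar{M}L=\bar{G}$, so $\bar{M}$ belongs to the defining family while not containing $L\leq\bar{D}$ --- the desired contradiction. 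One small point worth making explicit: for $G\neq 1$ the defining family is automatically nonempty (indeed $\tilde{F}(G)/\Phi(G)=Soc(G/\Phi(G))\neq 1$, so some maximal subgroup fails to contain $\tilde{F}(G)$ and hence supplements it), and your contradiction argument in fact covers the empty-family reading as well, since whenever $\bar{G}\neq 1$ it manufactures a member of the family; the case $G=1$ is vacuous.
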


  Another direction of applications of generalizations of the Fitting subgroup is  connected with the
 following concept.  Recall \cite{BB},\cite{41} that a subgroup functor is a function $\tau$ which
 assigns to each group $G$ a possibly empty set $\tau(G)$ of subgroups of $G$ satisfying
 $f(\tau(G))=\tau(f(G))$ for any isomorphism $f: G\rightarrow G^*$.

\begin{defn} Let $\theta$ be a subgroup functor  and $R$ be a subgroup of a group $G$. We shall
 call a subgroup $H$ of $G$ the $R$-$\theta$-subgroup if $H\in\theta(\langle H, R\rangle)$.\end{defn}

Let $\theta$ be the $\mathbb{P}$-subnormal subgroup functor. Recall \cite{32} that a subgroup $H$
 of a group $G$ is called  $\mathbb{P}$-subnormal in $G$ if $H=G$ or there is a chain of subgroup
 $H=H_0<H_1<\dots<H_n=G$ where $|H_i:H_{i-1}|$ is a prime for $i=1,\dots,n$.     O. Kramer's theorem
 (\cite{22}, p.12) states

\begin{thm}
If every maximal subgroup of a soluble group $G$ is $F(G)$-$\mathbb{P}$-subnormal then $G$ is
 supersoluble.\end{thm}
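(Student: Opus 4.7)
The plan is to argue by contradiction, taking $G$ to be a counterexample of minimal order. First, the hypothesis admits a clean reformulation for maximal subgroups: if $M$ is maximal in $G$ then either $F(G)\subseteq M$, in which case $\langle M,F(G)\rangle=M$ and $\mathbb{P}$-subnormality is automatic, or $F(G)\not\subseteq M$, in which case maximality forces $\langle M,F(G)\rangle=G$ and the $\mathbb{P}$-subnormality of the maximal subgroup $M$ in $G$ is equivalent to $|G:M|$ being prime. So the assumption becomes: every maximal subgroup $M$ of $G$ satisfies $F(G)\subseteq M$ or $|G:M|$ is a prime.

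The first step is the reduction to $\Phi(G)=1$. Since every maximal subgroup of $G$ contains $\Phi(G)$, and since $F(G/\Phi(G))=F(G)/\Phi(G)$, the above dichotomy transfers verbatim to $G/\Phi(G)$. By minimality of $|G|$, $G/\Phi(G)$ is supersoluble; since supersolubility is a saturated formation, this forces $G$ itself to be supersoluble, contradicting the choice of $G$. Hence $\Phi(G)=1$.

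With $\Phi(G)=1$ and $G$ soluble, $F(G)=\mathrm{Soc}(G)=N_{1}\times\dots\times N_{k}$ is a direct product of minimal normal subgroups of $G$, each elementary abelian, and $C_{G}(F(G))=F(G)$. For each $i$, since $N_{i}\not\subseteq\Phi(G)=1$, there is a maximal subgroup $M$ of $G$ with $N_{i}\not\subseteq M$; then $G=N_{i}M$, and the dichotomy rules out $F(G)\subseteq M$ (that would force $N_{i}\subseteq M$), so $|G:M|$ is a prime. Because $N_{i}$ is abelian and $G=N_{i}M$, $N_{i}\cap M\trianglelefteq G$, whence minimality of $N_{i}$ gives $N_{i}\cap M=1$ and $|N_{i}|=|G:M|$ is a prime.

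Finally, each $N_{i}$ is $G$-invariant of prime order $p_{i}$, so $G/C_{G}(N_{i})\hookrightarrow\mathrm{Aut}(N_{i})\cong C_{p_{i}-1}$ is cyclic; intersecting, $G/F(G)=G/\bigcap_{i}C_{G}(N_{i})$ embeds in the direct product $\prod_{i}G/C_{G}(N_{i})$ of cyclic groups, so $G/F(G)$ is abelian. Conjugation of $G$ on $G/F(G)$ is therefore trivial, so every chief factor of $G$ above $F(G)$ is a minimal subgroup of the abelian group $G/F(G)$ and is cyclic of prime order; combined with the previous paragraph, every chief factor of $G$ is of prime order, so $G$ is supersoluble, the desired contradiction. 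The main delicacy I expect is verifying the reduction to $\Phi(G)=1$, where one must use both the identity $F(G/\Phi(G))=F(G)/\Phi(G)$ and the saturated-formation property of the supersoluble class; the remaining structural steps are standard consequences of $F(G)=\mathrm{Soc}(G)$ and $C_{G}(F(G))=F(G)$ in the soluble case with $\Phi(G)=1$.
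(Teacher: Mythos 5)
Your proof is correct, but note that the paper itself contains no proof of this statement: it is quoted as O.~Kramer's theorem with a reference to Weinstein's book (\cite{22}, p.~12), so there is no in-paper argument to compare against, and your proposal must stand on its own --- which it does. The opening reformulation is exactly right: for a maximal subgroup $M$, a $\mathbb{P}$-subnormal chain from $M$ to $\langle M,F(G)\rangle$ either is trivial (when $F(G)\subseteq M$) or has length one by maximality (when $\langle M,F(G)\rangle=G$), so the hypothesis becomes ``$F(G)\subseteq M$ or $|G:M|$ is prime,'' and this dichotomy visibly passes to $G/\Phi(G)$ via $F(G/\Phi(G))=F(G)/\Phi(G)$ and the bijection between maximal subgroups of $G/\Phi(G)$ and maximal subgroups of $G$; saturation of the supersoluble formation then completes the Frattini reduction. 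In the case $\Phi(G)=1$ you correctly invoke Gasch\"utz's theorem that for soluble $G$ one has $F(G)=Soc(G)=N_1\times\dots\times N_k$ with each $N_i$ elementary abelian and $C_G(F(G))=F(G)$; the complement argument ($N_i\cap M\trianglelefteq G$ because $N_i$ is abelian and $G=N_iM$) gives $|N_i|=|G:M|$ prime, and the embedding of $G/F(G)=G/\bigcap_i C_G(N_i)$ into $\prod_i G/C_G(N_i)$ with each factor a subgroup of the cyclic group $\mathrm{Aut}(N_i)\cong C_{p_i-1}$ makes $G/F(G)$ abelian, so all chief factors above $F(G)$ are central of prime order, while refining through $1<N_1<N_1N_2<\dots<F(G)$ handles the factors below. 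This is, as far as the classical literature goes, essentially the standard route to Kramer's theorem (Frattini reduction plus socle analysis), so you have lost no generality and introduced no gaps; the only stylistic remark is that the final paragraph's phrase ``every chief factor of $G$ is of prime order'' deserves the one-line Jordan--H\"older justification you left implicit, namely that exhibiting one chief series with prime-order factors suffices.
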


This theorem was generalized by Yangming Li,  Xianhua Li in \cite{30}.

\begin{thm}
A group $G$ is supersoluble if and only if every maximal subgroup of  $G$ is
 $\tilde{F}(G)$-$\mathbb{P}$-subnormal.\end{thm}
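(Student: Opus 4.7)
The forward implication is immediate: in a supersoluble group every maximal subgroup $M$ has prime index, so $\langle M,\tilde{F}(G)\rangle$ equals $M$ (when $\tilde{F}(G)\subseteq M$) or $G$ (otherwise), and in both cases $M$ is $\mathbb{P}$-subnormal in that overgroup. For the converse I would first translate the hypothesis into the working form: \emph{every maximal subgroup $M$ of $G$ either contains $\tilde{F}(G)$ or has prime index in $G$}; indeed, if $\tilde{F}(G)\not\subseteq M$ then maximality forces $\langle M,\tilde{F}(G)\rangle=G$, and a $\mathbb{P}$-subnormal maximal subgroup of $G$ must have prime index.

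The converse I attack by a minimal counterexample $G$. First I reduce to $\Phi(G)=1$: the equality $\tilde{F}(G/\Phi(G))=\tilde{F}(G)/\Phi(G)$ (from the definition of $\tilde{F}$ together with $\Phi(G/\Phi(G))=1$) and the standard correspondence of maximal subgroups show that the hypothesis passes to $G/\Phi(G)$; minimality then gives $G/\Phi(G)$ supersoluble, and since the class of supersoluble groups is a saturated formation, $G$ would be supersoluble, contradicting the choice of $G$. Hence $\Phi(G)=1$ and $\tilde{F}(G)=\operatorname{Soc}(G)$ is a direct product of minimal normal subgroups of $G$. Next I show that every minimal normal subgroup $N$ of $G$ has prime order. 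Since $\Phi(G)=1$ there is a maximal subgroup $M$ with $N\not\subseteq M$; then $\tilde{F}(G)\not\subseteq M$, so $|G:M|=p$ is prime and $|N:N\cap M|=p$ with $N\cap M\trianglelefteq M$. If $N$ is abelian, then $N\cap M$ is also normal in $N$, hence in $MN=G$, and minimality of $N$ forces $N\cap M=1$, so $|N|=p$.

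The main obstacle is to exclude a \emph{non-abelian} minimal normal subgroup $N=T_1\times\cdots\times T_k$ with $T_i$ non-abelian simple. Here the plan is to exploit the abundance of maximal subgroups produced by the hypothesis: for each prime $q\mid|N|$, a Frattini argument applied to $Q\in\operatorname{Syl}_q(N)$ and a maximal subgroup $M_q\supseteq N_G(Q)$ yields $M_qN=G$ with $|G:M_q|$ a prime coprime to $q$, and $M_q\cap N\trianglelefteq M_q$ a proper subgroup of $N$ of that prime index containing $Q$; since $M_q\cap N$ cannot be $G$-normal (else minimality of $N$ would give $M_q\cap N=1$ and force $N$ abelian), one has $N_G(M_q\cap N)=M_q$, and in particular $M_q\cap N$ is self-normalizing in $N$. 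These highly restrictive constraints on subgroups of prime index in a direct product of non-abelian simple groups should be ruled out by a direct argument, appealing if necessary to the known list of non-abelian simple groups with a subgroup of prime index (CFSG). Once every minimal normal subgroup of $G$ is cyclic of prime order, $\tilde{F}(G)$ is abelian and the conjugation action of $G$ on each cyclic factor $N_i$ goes through the cyclic group $\operatorname{Aut}(N_i)$, so $G/C_G(\tilde{F}(G))$ is abelian; by the property $C_G(\tilde{F}(G))\subseteq\tilde{F}(G)$ the quotient $G/\tilde{F}(G)$ is abelian, in particular supersoluble, and combining a prime-order chief series of $\tilde{F}(G)$ with one of $G/\tilde{F}(G)$ yields a prime-order chief series of $G$, contradicting the choice of $G$.
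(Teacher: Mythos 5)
Note first that the paper does not prove this statement at all: it is quoted as Theorem 1.4 from Li and Li \cite{30}, so your proposal can only be judged on its own merits. Most of it holds up: the forward direction, the translation of the hypothesis into ``every maximal subgroup either contains $\tilde{F}(G)$ or has prime index'', the Frattini reduction to $\Phi(G)=1$ via $\tilde{F}(G/\Phi(G))=\tilde{F}(G)/\Phi(G)$ and saturation of the supersoluble formation, the abelian case, and the final assembly (with $C_G(\tilde{F}(G))\subseteq\tilde{F}(G)$, all minimal normal subgroups of prime order give $G/\tilde{F}(G)$ abelian and a chief series with factors of prime order) are all correct. The genuine gap is the exclusion of a non-abelian minimal normal subgroup $N$: this is the crux of the theorem, and you leave it at ``should be ruled out by a direct argument, appealing if necessary to \dots (CFSG)''. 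The constraints you actually derive do not yield a contradiction by themselves: a self-normalizing subgroup of prime index in a product of non-abelian simple groups certainly exists (e.g.\ $A_4$ of index $5$ in $A_5$, and indeed $S_5$ with $N=A_5$, $M=S_4$ realizes your whole configuration), so no ``direct argument'' follows from what you have written, and an appeal to the CFSG list of simple groups with a subgroup of prime index would still require a further case analysis that you do not carry out.

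The gap is closable, elementarily and without CFSG, from data you already set up: do not take an arbitrary prime $q\mid |N|$, but the \emph{largest} one. Write $N=T_1\times\cdots\times T_k$ with the $T_i$ isomorphic to a non-abelian simple group $T$, so $q$ divides $|T|$. For $Q\in \mathrm{Syl}_q(N)$ the Frattini argument gives $G=NN_G(Q)$ with $N_G(Q)<G$ (else $Q\trianglelefteq G$ forces $Q=N$, impossible as $N$ is not a $q$-group); a maximal $M_q\supseteq N_G(Q)$ then satisfies $N\nsubseteq M_q$, hence $\tilde{F}(G)\nsubseteq M_q$ and, by your working hypothesis, $|G:M_q|=r$ is prime, so $H=N\cap M_q$ has index $r$ in $N$ and contains $Q$, whence $r\neq q$. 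Now let $N$ act on the $r$ cosets of $H$: the kernel $K$ is a proper normal subgroup of $N$, so $N/K\simeq T^j$ with $j\geq 1$ embeds in the symmetric group on $r$ letters, and therefore $q\mid |T|\mid r!$ gives $q\leq r$; on the other hand $r$ divides $|N|=|T|^k$, so $r\leq q$ by maximality of $q$. Thus $r=q$, contradicting $r\neq q$, and the non-abelian case is eliminated. With this paragraph inserted in place of your appeal to CFSG, your proof is complete and entirely elementary.
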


Let $\theta$ be the conjugate-permutable subgroup functor. Recall \cite{7} that a subgroup $H$ of
 a group $G$  is called conjugate-permutable if $HH^x=H^xH$ for all $x\in G$.

\begin{thm} \cite{31} A group $G$ is nilpotent if and only if every maximal subgroup of
  $G$ is $\tilde{F}(G)$-conjugate-permutable.\end{thm}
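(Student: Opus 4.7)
The plan is to treat the two implications separately. The forward direction is immediate: if $G$ is nilpotent then every maximal subgroup $M$ of $G$ is normal in $G$, so $M^x=M$ and $MM^x=M=M^xM$ for every $x$, giving conjugate-permutability of $M$ in any overgroup and in particular in $\langle M,\tilde{F}(G)\rangle$.

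For the converse, assume that every maximal subgroup of $G$ is $\tilde{F}(G)$-conjugate-permutable, and fix a maximal subgroup $M$ with $\tilde{F}(G)\not\subseteq M$. By maximality $M\tilde{F}(G)=G=\langle M,\tilde{F}(G)\rangle$, so the hypothesis forces $M$ to be conjugate-permutable in $G$. By Foguel's theorem that every conjugate-permutable subgroup of a finite group is subnormal, $M$ is subnormal in $G$, hence normal, and $G/M$ is a simple group.

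The critical step is then to rule out the possibility that $G/M$ is a nonabelian simple group $T$. Setting $\bar G=G/\Phi(G)$ and $\bar M=M/\Phi(G)$, one has $\tilde{F}(G)/\Phi(G)=Soc(\bar G)$ and $G/M\cong Soc(\bar G)/(\bar M\cap Soc(\bar G))$. I would pick a minimal normal subgroup $N$ of $\bar G$ inside $Soc(\bar G)$ with $N\not\subseteq \bar M$; by minimality $N\cap \bar M=1$, and combined with $\bar M\cdot Soc(\bar G)=\bar G$ this yields $\bar G=\bar M\times N$ and $N\cong T$. For any proper maximal subgroup $L_0$ of $N$, the subgroup $\bar M\times L_0$ is maximal in $\bar G$, and its preimage $M_1$ in $G$ is a maximal subgroup of $G$ with $\tilde{F}(G)\not\subseteq M_1$ (since $N\not\subseteq \bar M\times L_0$). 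Re-applying the previous paragraph to $M_1$ forces $M_1\triangleleft G$, so $\bar M\times L_0$ is normal in $\bar M\times N$, whence $L_0\triangleleft N$; but $N\cong T$ is simple, a contradiction. Therefore $G/M$ has prime order and $G'\subseteq M$.

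Finally, Theorem 1.2 gives $\Phi(G)=\bigcap\{M:M\text{ is maximal in }G\text{ and }M\tilde{F}(G)=G\}$, and the preceding step shows $G'\subseteq M$ for every such $M$. Hence $G'\subseteq \Phi(G)$, so $G/\Phi(G)$ is abelian and in particular nilpotent, and since nilpotence lifts through the Frattini quotient, $G$ itself is nilpotent. The principal obstacle is the middle step: building the auxiliary maximal subgroup $M_1$ with $\tilde{F}(G)\not\subseteq M_1$, so that Foguel's theorem can be re-applied to eliminate the possibility that $G/M$ is nonabelian simple.
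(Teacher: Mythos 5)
Your argument is correct, but note first that the paper never proves this statement itself --- it is quoted from \cite{31} --- so the only in-paper benchmark is the proof of the analogous Theorem 4.1 for $\tilde{F}(G)$-subnormal maximal subgroups. That proof reduces modulo $\Phi(G)$ using $\tilde{F}(G/\Phi(G))=\tilde{F}(G)/\Phi(G)$, then in the case $\Phi(G)=1$ shows that every abnormal maximal subgroup contains $\tilde{F}(G)$, so that $\tilde{F}(G)\leq\Delta(G)=Z_\infty(G)$ by Lemma 2.1, and finishes with $C_G(\tilde{F}(G))\leq\tilde{F}(G)$ together with Lemmas 2.2 and 2.3. You take a genuinely different route: Foguel's theorem \cite{7} that conjugate-permutable subgroups are subnormal turns each maximal $M$ with $M\tilde{F}(G)=G$ into a normal subgroup of prime index, and the intersection theorem (Theorem 1.2) then yields $G'\leq\Phi(G)$, whence $G/\Phi(G)$ is abelian and $G$ is nilpotent. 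Your route avoids the hypercenter machinery entirely, at the cost of invoking the nontrivial Theorem 1.2, and it establishes along the way the stronger fact that every maximal subgroup supplementing $\tilde{F}(G)$ is normal of prime index. An even shorter reduction was available to you: since Foguel's theorem makes every $\tilde{F}(G)$-conjugate-permutable subgroup $\tilde{F}(G)$-subnormal, your hypothesis immediately implies the hypothesis of Theorem 4.1, which then applies verbatim.

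Second, what you call the ``critical step'' is vacuous. Once $M$ is both maximal and normal, the correspondence theorem shows that $G/M$ has no subgroups other than the identity and itself, hence is cyclic of prime order; $G/M$ can never be a nonabelian simple group $T$, because $T$ has proper nontrivial subgroups, and these would pull back to subgroups lying strictly between $M$ and $G$. Your detour through $\bar{G}=\bar{M}\times N$ and the auxiliary maximal subgroup $M_1$ is internally sound (the existence of $N$, the direct decomposition, and the normality of $L_0$ all check out), but it eliminates a case that cannot occur, and labelling it ``the principal obstacle'' signals a misreading of where the difficulty lies. The proof loses nothing if that entire paragraph is replaced by the one-line observation above.
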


\begin{thm} \cite{31} A group $G$ is nilpotent if and only if every Sylow subgroup of
  $G$ is $F^*(G)$-conjugate-permutable.\end{thm}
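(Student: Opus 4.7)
Necessity is immediate: if $G$ is nilpotent, $F^*(G)=G$, so $\langle P,F^*(G)\rangle=G$ for every Sylow $P$, and the normality of $P$ in $G$ gives $PP^x=P=P^xP$ for all $x\in G$. For sufficiency, I proceed by minimal counterexample. Since $F^*(G)\trianglelefteq G$, we have $\langle P,F^*(G)\rangle=F^*(G)P$, and Foguel's theorem (conjugate-permutable subgroups of finite groups are subnormal) gives $P$ subnormal in $F^*(G)P$; as $P$ is a Sylow subgroup there, subnormality forces normality, so $P\trianglelefteq F^*(G)P$ and $F^*(G)\leq N_G(P)$. Intersecting, every Sylow subgroup of $F^*(G)$ (of the form $F^*(G)\cap P$) is normal in $F^*(G)$, so $F^*(G)$ is itself nilpotent; combined with $F(G)\leq F^*(G)$ this yields $F^*(G)=F(G)$ and the key property $C_G(F(G))\leq F(G)$, i.e.\ $C_G(F(G))=Z(F(G))$.

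Next, for each prime $p$ and Sylow $P_p$, $[F(G),P_p]\leq F(G)\cap P_p=O_p(G)$, and for $q\neq p$ this specialises to $[O_q(G),P_p]\leq O_p(G)\cap O_q(G)=1$; hence every Sylow $P_p$ centralizes $O_q(G)$ whenever $q\neq p$. Consequently $O^p(G)\leq C_G(O_p(G))$, so $G/C_G(O_p(G))$ is a $p$-group, and the embedding $G/C_G(F(G))\hookrightarrow \prod_p G/C_G(O_p(G))$ into a direct product of pairwise coprime-order factors exhibits $G/Z(F(G))=G/C_G(F(G))$ as a subgroup of a nilpotent group, hence itself nilpotent.

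The main obstacle is then to pass from the nilpotency of $G/Z(F(G))$ to that of $G$ itself. My plan is to show $Z(F(G))\leq Z_\infty(G)$, the hypercenter. For each prime $p$, $G/C_G(Z(O_p(G)))$ is a $p$-group acting on the abelian $p$-group $Z(O_p(G))$, so iterating the standard fact that a $p$-group acting on a nontrivial $p$-group has nontrivial fixed points produces a $G$-invariant chain in $Z(O_p(G))$ with $G$-central quotients, placing $Z(O_p(G))\leq Z_\infty(G)$. Taking the product over $p$, $Z(F(G))\leq Z_\infty(G)$; combined with $\gamma_m(G)\leq Z(F(G))$ (from the nilpotency of $G/Z(F(G))$), this forces $\gamma_k(G)=1$ for sufficiently large $k$, so $G$ is nilpotent, contradicting the choice of minimal counterexample. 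The delicate point is verifying that the fixed-point chain in $Z(O_p(G))$ is $G$-invariant with $G$-central factors, not merely invariant under the $p$-group quotient $G/C_G(Z(O_p(G)))$; this holds precisely because the whole action of $G$ on $Z(O_p(G))$ factors through that $p$-group, so every step of the $p$-group central series is automatically a step of a $G$-central series.
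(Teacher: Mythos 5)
Your proof is correct, but it takes a genuinely different route in its second half. Note first that the paper quotes this theorem from \cite{31} without reproving it; the closest in-paper argument is the proof of Theorem 4.3 (implication (5)$\Rightarrow$(1)) together with Corollary 4.4, which handles the subnormal analogue. That proof and yours agree in the first half: both deduce $F^*(G)\leq N_G(P)$ for every Sylow subgroup $P$ --- the paper from pronormality plus subnormality of $P$ in $PF^*(G)$, you from Foguel's theorem that conjugate-permutable subgroups are subnormal plus the fact that a subnormal Sylow subgroup is normal; these are essentially the same mechanism. The divergence comes next: the paper invokes Baer's classical theorem \cite{15} to get $F^*(G)\subseteq Z_\infty(G)$ and finishes with Lemmas 2.2--2.4 ($C_G(F^*(G))\leq F(G)$, $G^{\mathfrak{N}}\leq C_G(Z_\infty(G))$, and $G^{\mathfrak{N}}\leq Z_\infty(G)$ forces nilpotency), whereas you reprove the needed case of Baer's theorem from scratch: you show $F^*(G)=F(G)$ is nilpotent (all its Sylow subgroups $F^*(G)\cap P$ are normalized), then that $G/C_G(F(G))$ is nilpotent via the coprime commutator computation $[O_q(G),P_p]\leq O_p(G)\cap O_q(G)=1$ and the embedding into $\prod_p G/C_G(O_p(G))$, and finally that $Z(F(G))=C_G(F(G))\leq Z_\infty(G)$ by iterating fixed points of the $p$-group action on $Z(O_p(G))$; your ``delicate point'' is handled correctly, since the whole $G$-action on $Z(O_p(G))$ factors through the $p$-group $G/C_G(Z(O_p(G)))$, so the fixed-point chain is indeed a $G$-central chain. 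The paper's route is shorter and stays at the level of the quasinilpotent machinery; yours is longer but elementary and self-contained, avoids citing Baer, and yields the stronger intermediate fact that the hypotheses already force $F^*(G)$ to be nilpotent. One cosmetic remark: you announce a minimal counterexample but never use minimality --- your argument is direct, and the final ``contradiction'' is simply the desired conclusion.
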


Let $\theta$ be the subnormal subgroup functor. In  \cite{mv} V. Murashka and A. Vasil'ev began
 to study products of $F(G)$-subnormal subgroups. In this paper we continue to study the
 influence of $R$-subnormal subgroups (maximal, Sylow, cyclic primary) on the structure of
 finite groups  in the case when $R\in\{F(G), F^*(G),\tilde{F}(G)\}$.

\section{Preliminaries}

We use standard notation and terminology, which if necessary can be found in \cite{BB}, \cite{19}
 and \cite{13}. Recall that for a group $G$,  $\Phi(G)$ is the Frattini subgroup of $G$; $\Delta(G)$
 is the intersection of all abnormal  maximal subgroups of $G$; $Z(G)$ is the center of $G$;
 $Z_\infty(G)$ is the hypercenter of $G$; $Soc(G)$ is the socle of $G$; $G_\mathfrak{F}$ is the
 $\mathfrak{F}$-radical of $G$ for a $N_0$-closed class $\mathfrak{F}$ with 1; $G^\mathfrak{F}$
 is the $\mathfrak{F}$ residual of $G$ for a formation $\mathfrak{F}$; $\mathfrak{N}$ is the class
 of all nilpotent groups, $\mathfrak{N}^*$ is the class of all quasinilpotent groups; $\mathbb{F}_p$
 is a field composed by $p$ elements.

A class of group $\mathfrak{F}$ is said to be $N_0$-closed if $A,B\triangleleft G$ and
 $A,B\in\mathfrak{F}$ imply $AB\in\mathfrak{F}$.

A class of group $\mathfrak{F}$ is said to be $s_n$-closed if $A\triangleleft G$ and
 $G\in\mathfrak{F}$ imply  $A\in\mathfrak{F}$.


Recall that a subgroup $H$ of a group $G$ is called abnormal in $G$ if $g\in\langle H,H^g\rangle$
 for all $g\in G$. It is known that normalizer of Sylow subgroups and maximal subgroups of $G$
 are abnormal.

\begin{lem} (\cite{13}, p.95 or \cite{40})
Let $G$ be a group. Then \[\Delta(G/\Phi(G))=\Delta(G)/\Phi(G)=Z_\infty(G/\Phi(G))=Z(G/\Phi(G))\]
 in particular $\Delta(G)$ is nilpotent.
\end{lem}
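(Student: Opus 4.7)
The plan is to pass to the quotient $\bar G := G/\Phi(G)$ and prove the chain of inclusions
\[Z(\bar G) \;\subseteq\; Z_\infty(\bar G) \;\subseteq\; \Delta(\bar G) \;\subseteq\; Z(\bar G),\]
forcing all four subgroups to coincide, from which the displayed equalities and the nilpotency of $\Delta(G)$ follow. The identification $\Delta(G)/\Phi(G) = \Delta(G/\Phi(G))$ is immediate from the standard correspondence: every maximal subgroup of $G$ contains $\Phi(G)$, and the bijection $M \mapsto M/\Phi(G)$ preserves normality, hence also abnormality (since a maximal subgroup is either normal or abnormal).

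For the easy inclusion $Z_\infty(\bar G) \subseteq \Delta(\bar G)$ I would induct along the upper central series of $\bar G$: if every abnormal maximal $M$ already contains $Z_{i-1}(\bar G)$, then for $z \in Z_i(\bar G)$ one has $[z,M] \subseteq Z_{i-1}(\bar G) \subseteq M$, whence $M^z = M$ and $z \in \langle M, M^z\rangle = M$ by abnormality. Next I would establish $Z_\infty(\bar G) = Z(\bar G)$ using $\Phi(\bar G) = 1$: since $\Phi(N) \subseteq \Phi(G)$ for any $N \triangleleft G$, each $O_p(\bar G)$ is elementary abelian, and Gaschütz's theorem on complementation of abelian normal subgroups makes $F(\bar G)$ a completely reducible $\bar G$-module. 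Any simple $\bar G$-submodule $W$ of $Z_\infty(\bar G) \subseteq F(\bar G)$ lying in $Z_k(\bar G) \setminus Z_{k-1}(\bar G)$ would give $[W,\bar G] \subseteq W \cap Z_{k-1}(\bar G) \subsetneq W$, and simplicity forces $[W,\bar G]=0$, so $W \subseteq Z(\bar G)$ and hence $Z_\infty(\bar G) \subseteq Z(\bar G)$.

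For the main step $\Delta(\bar G) \subseteq Z(\bar G)$ I argue by induction on $|\bar G|$, via the structural claim that any minimal normal subgroup $V$ of $\bar G$ contained in $\Delta(\bar G)$ is abelian and central. Since $V \not\subseteq \Phi(\bar G) = 1$, some maximal $M$ satisfies $VM = \bar G$, and minimality gives $V \cap M = 1$; $M$ cannot be abnormal, for then $V \subseteq \Delta(\bar G) \subseteq M$ would contradict $V \not\subseteq M$, so $M$ is normal and $\bar G = V \times M$. Should $V$ be non-abelian, say $V = S_1 \times \cdots \times S_r$ with non-abelian simple $S_i$, I pick a maximal subgroup $H$ of $S_1$: maximality inside the simple group $S_1$ gives $N_{S_1}(H) = H$, so $H$ is abnormal in $S_1$; then $T := H \times S_2 \times \cdots \times S_r$ is an abnormal maximal subgroup of $V$, and $T \times M$ is an abnormal maximal subgroup of $V \times M = \bar G$ not containing $V$, contradicting $V \subseteq \Delta(\bar G)$. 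Hence $V$ is abelian, and $\bar G = V \times M$ yields $[V,\bar G]=1$, so $V \subseteq Z(\bar G)$. Writing $\bar G = V \times K$ for this central $V$, we have $\Phi(K) \subseteq \Phi(\bar G) = 1$, and abnormal maximals of $\bar G/V \cong K$ pull back to abnormal maximals of $\bar G$ containing $V$, hence containing $\Delta(\bar G)$; so $\Delta(\bar G)/V \subseteq \Delta(\bar G/V)$. The inductive hypothesis applied to $K$ gives $\Delta(\bar G/V) \subseteq Z(\bar G/V)$, whence $[\Delta(\bar G),\bar G] \subseteq V \subseteq Z(\bar G)$, so $\Delta(\bar G) \subseteq Z_2(\bar G) \subseteq Z_\infty(\bar G) = Z(\bar G)$.

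The nilpotency of $\Delta(G)$ then follows from the abelianness of $\Delta(G)/\Phi(G) = Z(G/\Phi(G))$ and the classical Gaschütz-type observation that any subgroup of $G$ lying between $\Phi(G)$ and $G$ whose image in $G/\Phi(G)$ is nilpotent is itself nilpotent. The principal obstacle is the non-abelian subcase of the structural claim in the third paragraph: constructing the abnormal maximal $T \times M$ and verifying that abnormality of $T$ in $V$ propagates to abnormality of $T \times M$ in $V \times M$ is a routine but indispensable computation, and it is precisely this step that prevents any non-abelian minimal normal subgroup from being concealed inside $\Delta(\bar G)$.
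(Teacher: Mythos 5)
The paper offers no proof of this lemma at all --- it is imported from Shemetkov's book and Gasch\"utz's 1953 paper --- so your argument stands or falls on its own, and in substance it is a correct reconstruction of the classical Gasch\"utz argument. Writing $\bar G=G/\Phi(G)$ as you do: the normal-or-abnormal dichotomy for maximal subgroups yields $\Delta(G)/\Phi(G)=\Delta(\bar G)$; the hypercenter lies in every abnormal (indeed every self-normalizing) subgroup, by exactly your commutator induction; $\Phi(\bar G)=1$ makes $F(\bar G)$ a completely reducible module, forcing $Z_\infty(\bar G)=Z(\bar G)$; and your induction via a central, complemented minimal normal subgroup closes the chain $Z(\bar G)\subseteq Z_\infty(\bar G)\subseteq\Delta(\bar G)\subseteq Z(\bar G)$. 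The exclusion of a non-abelian $V$ by the subgroup $T\times M$ is sound: $T\times M$ is maximal and non-normal in $\bar G=V\times M$, hence abnormal, and does not contain $V$.

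Two of your statements need repair, though neither destroys the proof. First, you assert that ``minimality gives $V\cap M=1$'' before you know that $M$ is normal; for a merely maximal $M$ this is false (in $S_5$ take $V=A_5$ and $M=S_4$: then $VM=S_5$ but $V\cap M=A_4\neq 1$). The correct order is the reverse of yours: $M$ is not abnormal (else $V\leq\Delta(\bar G)\leq M$), hence normal; only then is $V\cap M$ normal in $\bar G$, and minimality of $V$ kills it. Second, the closing ``Gasch\"utz-type observation'' is false as you state it for arbitrary intermediate subgroups: take $G=\langle a\rangle\leftthreetimes\langle b\rangle$ with $|a|=49$, $|b|=3$, and $b$ inducing an automorphism of order $3$ of $\langle a\rangle$; then $\Phi(G)=\langle a^{7}\rangle\simeq C_7$, while $H=\langle a^{7},b\rangle$ is a non-nilpotent Frobenius group of order $21$ with $H/\Phi(G)\simeq C_3$ nilpotent. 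The true statement (Gasch\"utz; it is the result the paper invokes from \cite{HD} as theorem 9.3) requires $H\triangleleft G$. This suffices for your application, because $\Delta(G)$ is the intersection of a conjugation-closed family of maximal subgroups and is therefore normal (indeed characteristic) in $G$ --- but you must say so, since nothing in your final paragraph records the normality of $\Delta(G)$, and without it the cited transfer of nilpotency from $\Delta(G)/\Phi(G)$ to $\Delta(G)$ is unavailable.
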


\begin{lem} (\cite{HD}, p.390)
Let $G$ be a group. Then $G^\mathfrak{N}\leq C_G(Z_\infty(G))$.
\end{lem}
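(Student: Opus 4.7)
The plan is to reduce the lemma to the statement that $G/C_G(Z_\infty(G))$ is nilpotent; the conclusion then follows immediately from the definition of $G^{\mathfrak N}$ as the smallest normal subgroup with nilpotent quotient.

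First I would exploit finiteness: the upper central series of $G$ stabilizes after finitely many steps, so $Z_\infty(G)=Z_n(G)$ for some $n$, giving a chain of characteristic subgroups
\[
1=Z_0(G)\leq Z_1(G)\leq\cdots\leq Z_n(G)=Z_\infty(G)
\]
with $[Z_i(G),G]\leq Z_{i-1}(G)$ for $1\leq i\leq n$ by the very definition of the upper central series. So $G$ acts on $Z_\infty(G)$ in such a way that it induces the identity on each factor of this chain.

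Next I would invoke the Kaloujnine--Hall stability theorem: a group acting on a finite chain of subgroups inducing the identity on every factor is nilpotent (of class at most $\binom{n}{2}$). Applied to the conjugation action of $G$ on $Z_\infty(G)$ along the upper central chain, the image of $G$ in $\mathrm{Aut}(Z_\infty(G))$, which is $G/C_G(Z_\infty(G))$, is nilpotent. Hence $G^{\mathfrak N}\leq C_G(Z_\infty(G))$.

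If one prefers to avoid quoting the stability theorem, essentially the same argument can be carried out by hand: induction on $j$ via the three-subgroups lemma yields $[\gamma_j(G),Z_i(G)]\leq Z_{i-j}(G)$ (with $Z_k=1$ for $k\leq 0$). Taking $j=n+1$ and $i=n$ gives $[\gamma_{n+1}(G),Z_\infty(G)]=1$; since in a finite group the lower central series stabilizes at $G^{\mathfrak N}$, we have $G^{\mathfrak N}\leq\gamma_{n+1}(G)$, and so $G^{\mathfrak N}$ centralizes $Z_\infty(G)$. The only real obstacle is the nilpotence of the induced automorphism group: the Kaloujnine--Hall theorem encapsulates it, and the commutator version is just the unfolding of its proof in the case of the upper central series.
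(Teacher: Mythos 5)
Your proposal is correct. Note first that the paper itself gives no proof of this lemma: it is quoted verbatim from Doerk--Hawkes (\cite{HD}, p.~390) as a known preliminary, so there is no internal argument to compare against, and a self-contained proof such as yours is exactly what a referee would want to see. Both of your routes are sound. In the first, since every $Z_i(G)$ satisfies $[Z_i(G),G]\leq Z_{i-1}(G)$, the conjugation action of $G$ on $Z_\infty(G)=Z_n(G)$ lands inside the stability group of the chain $1=Z_0(G)\leq\cdots\leq Z_n(G)$, so $G/C_G(Z_\infty(G))$ is nilpotent by Kaloujnine--Hall (in fact, since the chain consists of subgroups normal in $G$, Kaloujnine's sharper bound of class at most $n-1$ applies, though any nilpotency suffices), and $G^{\mathfrak{N}}\leq C_G(Z_\infty(G))$ follows because $G^{\mathfrak{N}}$ is contained in every normal subgroup with nilpotent quotient. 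In the second, your induction $[\gamma_j(G),Z_i(G)]\leq Z_{i-j}(G)$ checks out: the base case $j=1$ is the defining property of the upper central series, and the inductive step is a standard application of the three-subgroups lemma with $A=\gamma_j(G)$, $B=G$, $C=Z_i(G)$, both auxiliary commutators landing in the $G$-normal subgroup $Z_{i-j-1}(G)$. Taking $j=n+1$, $i=n$ (indeed $j=n$ already suffices, giving $Z_0=1$) yields $[\gamma_{n+1}(G),Z_\infty(G)]=1$, and in a finite group $G^{\mathfrak{N}}=\gamma_\infty(G)\leq\gamma_{n+1}(G)$, completing the argument. The only cosmetic point: the commutator version is not merely a fallback, it is the cleaner of the two here, since it avoids invoking a named theorem whose standard statements vary slightly (invariant versus arbitrary chains) and directly produces the subgroup $\gamma_{n+1}(G)$ that centralizes the hypercenter.
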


The following lemma is obvious.

\begin{lem}
Let $G$ be a group. If  $G^\mathfrak{N}\leq Z_\infty(G)$ then $G$ is nilpotent.
\end{lem}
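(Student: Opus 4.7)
The plan is to reduce immediately to the trivial observation that a nilpotent group with trivial centre must be trivial, so that the proof takes only a couple of lines. In particular, I would not invoke Lemma~2.2 directly, since a softer argument is available.

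First I would set $Z = Z_\infty(G)$ and form the quotient $G/Z$. The hypothesis $G^\mathfrak{N}\leq Z$ means that $G/Z$ is a homomorphic image of the nilpotent group $G/G^\mathfrak{N}$, so $G/Z$ is nilpotent. Next I would invoke the standard fact that the hypercentre is ``idempotent'' in the sense that $Z_\infty(G/Z_\infty(G))=1$; this is essentially the definition of $Z_\infty$ as the terminal term of the transfinite upper central series. In particular $Z(G/Z)=1$.

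The conclusion is then immediate: a nilpotent group whose centre is trivial must itself be trivial, so $G/Z=1$, that is, $G=Z_\infty(G)$, and hence $G$ is nilpotent.

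There is essentially no obstacle; the only thing one has to be a little careful about is citing the fact $Z_\infty(G/Z_\infty(G))=1$, which is standard but should be noted. As an alternative presentation one could use Lemma~2.2: the hypothesis gives $G^\mathfrak{N}\leq Z_\infty(G)\cap C_G(Z_\infty(G))$, so $Z_\infty(G)\cdot G^\mathfrak{N}=Z_\infty(G)$ is centralized by $G^\mathfrak{N}$, and pushing this through the upper central series of $G$ together with nilpotency of $G/G^\mathfrak{N}$ again forces $G=Z_\infty(G)$. The first version is shorter, which is why the authors call the lemma obvious.
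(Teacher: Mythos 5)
Your proof is correct. The paper itself offers no argument for this lemma (it is introduced with ``The following lemma is obvious''), and your two-line reduction --- $G/Z_\infty(G)$ is nilpotent since it is an image of $G/G^\mathfrak{N}$, while $Z(G/Z_\infty(G))=1$ by terminality of the upper central series, so a nilpotent group with trivial centre forces $G=Z_\infty(G)$ and hence $G$ nilpotent (the upper central series of a finite group reaching $G$) --- is precisely the standard argument the authors are taking for granted.
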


\begin{lem}(\cite{19}, p. 127) Let $G$ be a group. Then  $C_G(F^*(G))\leq F(G)$.\end{lem}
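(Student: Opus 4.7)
The plan is to exploit the defining formula $F^*(G)/F(G)=Soc(C_G(F(G))F(G)/F(G))$ and show that the image of $C:=C_G(F^*(G))$ in the quotient $\bar G:=C_G(F(G))F(G)/F(G)$ is forced to be trivial. Since $F(G)\leq F^*(G)$ we have $C\leq C_G(F(G))$, so $\bar C:=CF(G)/F(G)$ is a subgroup of $\bar G$. Because $C$ centralizes $F^*(G)$ and $F(G)$ is normal in $G$, the image $\bar C$ centralizes $F^*(G)/F(G)=Soc(\bar G)$.

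The central technical step is to prove $F(\bar G)=1$. First I would establish $F(C_G(F(G)))=Z(F(G))$: the inclusion $\supseteq$ is immediate, while for $\subseteq$ note that $F(C_G(F(G)))$ is characteristic in $C_G(F(G))$, hence normal and nilpotent in $G$, so $F(C_G(F(G)))\leq F(G)\cap C_G(F(G))=Z(F(G))$. Now let $N/F(G)$ be a normal nilpotent subgroup of $\bar G$ and set $M:=N\cap C_G(F(G))$. Then $M\trianglelefteq C_G(F(G))$ and $MF(G)=N$, so $M/(M\cap F(G))\cong N/F(G)$ is nilpotent; moreover $M\cap F(G)\leq Z(F(G))\leq Z(C_G(F(G)))\leq Z(M)$, the key inclusion $Z(F(G))\leq Z(C_G(F(G)))$ following from the fact that $C_G(F(G))$ centralizes $Z(F(G))\subseteq F(G)$. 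Invoking the standard criterion that $Z\leq Z(M)$ with $M/Z$ nilpotent forces $M$ to be nilpotent, we conclude $M\leq F(C_G(F(G)))=Z(F(G))\leq F(G)$. Therefore $N\leq F(G)$, so $F(\bar G)=1$.

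With $F(\bar G)=1$, every minimal normal subgroup of $\bar G$ is nonabelian, so $Soc(\bar G)$ has trivial center; hence $C_{\bar G}(Soc(\bar G))\cap Soc(\bar G)=1$, and since any nontrivial normal subgroup of $\bar G$ would contain a minimal normal subgroup lying in $Soc(\bar G)$, we obtain $C_{\bar G}(Soc(\bar G))=1$. Therefore $\bar C=1$, i.e.\ $C\leq F(G)$, as required. The most delicate point is the verification that $F(\bar G)=1$: a normal nilpotent subgroup of the quotient $\bar G$ need not lift to a nilpotent subgroup of $C_G(F(G))F(G)$ in general, and the argument depends essentially on the centrality of $Z(F(G))$ inside $C_G(F(G))$ to apply the central-extension criterion for nilpotency.
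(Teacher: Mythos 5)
Your proof is correct, and it is necessarily a different route from the paper's, because the paper gives no proof at all: the lemma is quoted from Huppert--Blackburn, \emph{Finite Groups III}, p.~127, where it is obtained from the theory of components --- one develops the layer $E(G)$, proves $F^*(G)=F(G)E(G)$ and the fundamental centralizer theorem $C_G(F^*(G))\leq F^*(G)$, and then deduces that $C:=C_G(F^*(G))$ satisfies $F^*(C)\leq C\cap F^*(G)\leq Z(C)$, forcing $C=C_C(F^*(C))\leq F^*(C)$ to be abelian normal in $G$, hence inside $F(G)$. Your argument bypasses components entirely and works only from the defining formula $F^*(G)/F(G)=Soc\bigl(C_G(F(G))F(G)/F(G)\bigr)$: the identity $F(C_G(F(G)))=Z(F(G))$, the Dedekind factorization $N=(N\cap C_G(F(G)))F(G)$ (valid since $F(G)\leq N$), and the central-extension criterion together give $F(\bar{G})=1$ for $\bar{G}=C_G(F(G))F(G)/F(G)$, whence $C_{\bar{G}}(Soc(\bar{G}))=1$ and $\bar{C}=1$; every step checks out, including the degenerate case $\bar{G}=1$. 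One cosmetic slip: the displayed chain $M\cap F(G)\leq Z(F(G))\leq Z(C_G(F(G)))\leq Z(M)$ is literally false at the last link, since $Z(C_G(F(G)))$ need not lie in $M$; what you actually need and have is that $M\cap F(G)$ lies both in $M$ and in $Z(C_G(F(G)))$, hence centralizes $M\leq C_G(F(G))$, so $M\cap F(G)\leq Z(M)$. What your approach buys is self-containedness at the level of the paper's own definitions, making the paper's use of the lemma independent of the component machinery; what the cited approach buys is more, namely the sharper conclusion $C_G(F^*(G))=Z(F(G))$ and the structural theory of $E(G)$ that is reusable elsewhere.
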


The following lemma is a simple generalization of Baer's theorem (\cite{22}, p.8).

\begin{lem} Let a group $G=AB$ be a product of supersoluble subnormal subgroups $A$ and $B$.
 If $G'$ is nilpotent then $G$ is supersoluble.\end{lem}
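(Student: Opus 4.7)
The strategy is strong induction on $|G|$, reducing the subnormal hypothesis to the normal one and then invoking Baer's classical theorem (\cite{22}, p.8) for normal supersoluble factors. If $A=G$ or $B=G$ there is nothing to prove, so I may assume both $A$ and $B$ are proper subnormal subgroups of $G$. For such an $A$ with shortest subnormal chain $A=A_0\triangleleft A_1\triangleleft\dots\triangleleft A_d=G$ of length $d\geq 1$, the normal closure satisfies $A^G\leq A_{d-1}<G$ (since $A\leq A_{d-1}$ and $A_{d-1}\triangleleft G$). Hence $|A^G|<|G|$, and similarly $|B^G|<|G|$, so the inductive hypothesis will be available on these normal closures.

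The central step is to represent $A^G$ as a product satisfying the hypothesis of the theorem. Since $[A,G]\leq G'$, every conjugate $A^g$ lies in $AG'$, and therefore $A^G\leq AG'$; a short set-theoretic calculation then yields the decomposition
\[
A^G \;=\; A\cdot(A^G\cap G').
\]
In this decomposition, $A$ is subnormal in $A^G$ (subnormality is inherited on intermediate subgroups) and supersoluble by hypothesis, while $A^G\cap G'$ is normal in $A^G$ (as the intersection of two subgroups normal in $G$, and $A^G$ itself being normal in $G$) and nilpotent as a subgroup of $G'$, hence in particular supersoluble. Moreover $(A^G)'\leq G'$ is nilpotent. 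Thus $A^G$ satisfies the hypothesis of the theorem with $|A^G|<|G|$, and by induction $A^G$ is supersoluble. The same argument applied to $B$ gives $B^G$ supersoluble.

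Finally, since $A\leq A^G$ and $B\leq B^G$, we have $G=AB\leq A^G B^G\leq G$, so $G$ is the product of the two normal supersoluble subgroups $A^G$ and $B^G$, and $G'$ is nilpotent; Baer's classical theorem then yields that $G$ is supersoluble. The only technical point to verify carefully — and the main (but mild) obstacle — is the decomposition $A^G=A\cdot(A^G\cap G')$ together with the subnormality and supersolubility of its two factors inside $A^G$; once this is established, everything reduces to an application of Baer's theorem.
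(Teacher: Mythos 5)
Your proof is correct. There is, however, nothing in the paper to compare it against line by line: the authors state this lemma without proof, remarking only that it is ``a simple generalization of Baer's theorem (\cite{22}, p.8)'' -- so you have supplied the argument they left implicit. Your reduction is sound at every step: for proper subnormal $A$ the minimal-length chain forces $A_{d-1}\neq G$, so $A^G\leq A_{d-1}<G$; since $[A,G]\leq G'$ we get $A^G\leq AG'$, and Dedekind's modular law (applicable because $A\leq A^G$) gives $A^G=A\,(A^G\cap G')$; the factor $A^G\cap G'$ is normal in $G$ (intersection of two normal subgroups) and nilpotent inside $G'$, $A$ is subnormal in the intermediate subgroup $A^G$, and $(A^G)'\leq G'$ is nilpotent, so the inductive hypothesis applies to $A^G$, and finally $G=AB\leq A^GB^G=G$ exhibits $G$ as a product of two normal supersoluble subgroups with $G'$ nilpotent, which is exactly Baer's theorem. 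One simplification worth noting: instead of recursing on $A^G$, you could invoke the paper's Lemma 2.6 (a product of a supersoluble subnormal subgroup and a nilpotent normal subgroup is supersoluble) to conclude directly that $A^G=A\,(A^G\cap G')$ is supersoluble, eliminating the induction entirely; conversely, your inductive version has the merit of being self-contained modulo Baer's classical theorem, since it does not presuppose Lemma 2.6.
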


Also we need the following well known result.

\begin{lem} Let a group $G=AB$ be a product of supersoluble subnormal subgroup $A$ and
 nilpotent normal subgroup $B$. Then $G$ is supersoluble.\end{lem}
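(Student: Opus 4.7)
The plan is to reduce the statement to Lemma 2.5. Since $B$ is nilpotent it is in particular supersoluble, and since $B$ is normal it is in particular subnormal. Thus $G = AB$ is already a product of two supersoluble subnormal subgroups, and it remains only to verify that $G'$ is nilpotent; then Lemma 2.5 will deliver the conclusion.

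To locate $G'$ inside the Fitting subgroup, I would first observe that $G/B \cong A/(A \cap B)$ is a quotient of the supersoluble group $A$, and hence is supersoluble. Comparing derived subgroups under this isomorphism gives the equality $G'B = A'B$, so in particular $G' \leq A'B$.

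Next I would check that $A'B \leq F(G)$. The inclusion $B \leq F(G)$ is immediate since $B \triangleleft G$ and $B$ is nilpotent. For $A'$, I use the classical theorem of Huppert that the derived subgroup of a supersoluble group is nilpotent, so $A'$ is nilpotent; moreover $A'$ is characteristic in $A$, which is subnormal in $G$, so $A'$ is itself a subnormal nilpotent subgroup of $G$. By Wielandt's characterization of $F(G)$ as the join of all subnormal nilpotent subgroups, $A' \leq F(G)$. Consequently $G' \leq A'B \leq F(G)$ is nilpotent, and Lemma 2.5 applies to give the supersolubility of $G$.

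The step that demands most attention is the inclusion $A' \leq F(G)$, because $A$ is only subnormal, not normal, in $G$; one cannot simply appeal to $F(A) \leq F(G)$ via normality. Wielandt's theorem is exactly what makes the argument go through with subnormality alone.
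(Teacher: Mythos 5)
Your proof is correct. Note that the paper itself offers no argument for this lemma at all: it is quoted as ``the following well known result'' immediately after Lemma 2.5, so there is no in-paper proof to compare against. What your argument does, in effect, is show that Lemma 2.6 is a formal consequence of Lemma 2.5: since $B$ is normal (hence subnormal) and nilpotent (hence supersoluble), the only missing hypothesis of Lemma 2.5 is the nilpotency of $G'$, and you establish it cleanly. Each step checks out: $G/B=AB/B$ gives $(G/B)'=G'B/B=A'B/B$, so $G'\leq A'B$; Huppert's theorem gives $A'$ nilpotent; $A'$ is characteristic in the subnormal subgroup $A$, hence subnormal in $G$; and Wielandt's theorem that $F(G)$ contains every nilpotent subnormal subgroup yields $A'\leq F(G)$, while $B\leq F(G)$ is immediate, so $G'\leq A'B\leq F(G)$ is nilpotent. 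Your closing remark is also well taken: subnormality of $A$ is genuinely indispensable, not a technicality --- for $G=A_4$ with $A$ a Sylow $3$-subgroup and $B=V_4$ the statement fails, and your proof uses subnormality precisely at the Wielandt step (and again when invoking Lemma 2.5, which requires both factors subnormal). One small caution for the written version: since Lemma 2.5 is itself stated in the paper without proof as a generalization of Baer's theorem, your derivation is only as self-contained as that lemma, but there is no circularity, as the standard proofs of Lemma 2.5 nowhere use Lemma 2.6.
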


\begin{lem}Let a group $G=\langle A,B\rangle$ where $A$ and $B$ are subgroups of $G$. Then

(1) $[A,B]\triangleleft G$;

(2) $A[A,B]\triangleleft G$ and  $B[A,B]\triangleleft G$;

(3) $G=AB[A,B]$;

(4) $G'=A'B'[A,B]$. \end{lem}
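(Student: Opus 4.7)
The plan is to handle the four parts in order, each building on the previous ones.

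For (1), the standard commutator identities $[xy,z]=[x,z]^y[y,z]$ and $[x,yz]=[x,z][x,y]^z$ imply that for any generator $[a,b]$ of $[A,B]$ and any $a_1\in A$, $b_1\in B$, the conjugates $[a,b]^{a_1}$ and $[a,b]^{b_1}$ can be rewritten as products of commutators of elements from $A$ with elements from $B$, and therefore lie in $[A,B]$. Since $G=\langle A,B\rangle$, it follows that $[A,B]$ is invariant under conjugation by every element of $G$.

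For (2), I would use (1) to see that $A[A,B]$ is a subgroup. It is invariant under conjugation by elements of $A$ trivially. For $b\in B$ and $a\in A$, the identity $a^b=a[a,b]$ shows $A^b\subseteq A[A,B]$, and by (1) the factor $[A,B]$ is stable under $b$-conjugation, so $(A[A,B])^b\subseteq A[A,B]$. Hence $A[A,B]$ is normal in $G$, and the argument for $B[A,B]$ is symmetric. Part (3) then follows at once: since $B[A,B]\triangleleft G$ by (2), the product $A\cdot B[A,B]=AB[A,B]$ is a subgroup of $G$ containing both $A$ and $B$, and therefore contains $\langle A,B\rangle=G$; the reverse inclusion is trivial.

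The last part (4) is the only one requiring a little care. First I would check that $N:=A'B'[A,B]$ is normal in $G$. Working modulo $[A,B]$, the quotient $A[A,B]/[A,B]$ is a normal subgroup of $G/[A,B]$ by (2), and its derived subgroup is exactly $A'[A,B]/[A,B]$; as a characteristic subgroup of a normal subgroup, it is normal in $G/[A,B]$, so $A'[A,B]\triangleleft G$. Symmetrically $B'[A,B]\triangleleft G$, and hence $N\triangleleft G$. Now in $G/N$ the images of $A$ and $B$ are abelian (as $A',B'\leq N$) and commute elementwise (as $[A,B]\leq N$), so $G/N$ is generated by two commuting abelian subgroups and is therefore abelian. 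This gives $G'\leq N$; the reverse inclusion $A'B'[A,B]\leq G'$ is immediate.

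The main obstacle is the normality verification in (4); everything else reduces to mechanical use of the commutator identities and the conclusion of (1).
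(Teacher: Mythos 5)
Your proof is correct, but it reaches part (4) by a genuinely different route than the paper. For (1)--(3) the paper simply cites Lemma 7.4(a),(b),(h) of Doerk--Hawkes, whereas you reprove them directly from the commutator identities $[xy,z]=[x,z]^y[y,z]$ and $[x,yz]=[x,z][x,y]^z$ --- the same substance, made self-contained. (One small point in your (2): the inclusion $(A[A,B])^b\subseteq A[A,B]$ yields equality because the groups here are finite; in general you would also conjugate by $b^{-1}$.) The real divergence is in (4). The paper argues bottom-up: since $A'B'[A,B]\subseteq G'$ is clear, by (3) it suffices to expand an arbitrary commutator $[a_1b_1c_1,a_2b_2c_2]$ via the commutator calculus of Lemma 7.3 of Doerk--Hawkes into a product of $G$-conjugates of $[a_1,a_2]$, $[a_1,b_2]$, $[a_1,c_2]$, $[b_1,b_2]$, $[b_1,c_2]$, $[c_1,c_2]$, using the (computationally verified) normality of $N=A'B'[A,B]$ to conclude each conjugate lies in $N$. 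You argue top-down: you get normality of $N$ conceptually, by observing modulo $[A,B]$ that $A'[A,B]/[A,B]$ is the derived subgroup, hence characteristic, of the normal subgroup $A[A,B]/[A,B]$ from (2) (and symmetrically for $B$), and then note that $G/N$ is generated by two elementwise-commuting abelian subgroups, so it is abelian and $G'\leq N$. Your version buys a cleaner argument with no element-level expansion and no appeal to an external commutator lemma; the paper's version buys brevity by outsourcing the calculus to Doerk--Hawkes and exhibits the explicit decomposition of each commutator, but hides a nontrivial verification behind ``straightforward to check.'' Both arguments ultimately hinge on the normality of $A'B'[A,B]$, which you establish more transparently.
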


\begin{proof} (1), (2) and (3) follows from (a),(b) and (h) of lemma 7.4  \cite{HD} p.23.
 Let prove (4). It is clear $A'B'[A,B]\subseteq G'$. Hence by (3) it is sufficient to
 show that $[a_1b_1c_1,a_2b_2c_2]\in A'B'[A,B]$ for every $a_1,a_2\in A$, $b_1,b_2\in B$
 and $c_1,c_2\in [A,B]$.  With the help of lemma 7.3 of \cite{HD} p.22 and (1) it is
 straightforward to check that $A'B'[A,B]$ is normal subgroup of $G$ and we can write
 $[a_1b_1c_1,a_2b_2c_2]$ as the product of conjugates in $G$ of
 $[a_1,a_2], [a_1,b_2], [a_1,c_2], [b_1,b_2], [b_1,c_2], [c_1,c_2]$.
 Hence $[a_1b_1c_1,a_2b_2c_2]\in A'B'[A,B]$. It means that $G'=A'B'[A,B]$.    \end{proof}

\section{Generalizations of $F(G)$ and their properties}

It is well known that $F(F(G))=F(G)$ and $F^*(F^*(G))=F^*(G)$.  In \cite{F1} P. Forster
  showed that there is a group $G$  with
$\tilde{F}(\tilde{F}(G))<\tilde{F}(G)$. He shows that there is a
  a non-abelian simple
group $E$ which has   $\mathbb{F}_pE$-module $V$ such
 that $R=Rad(V)$ is faithful irreducible $\mathbb{F}_pE$-module and $V/R$ is
 irreducible trivial $\mathbb{F}_pE$-module. Let $H$ be the semidirect product
 $V\leftthreetimes E$. Then $H'=RE$ is a primitive group  and $|H:H'|=p$.
  There is $\mathbb{F}_qH$-module $W$   with $C_H(W)=H'$, where $q\not= p$.
 Let $G=W\leftthreetimes H$.
 Then $\Phi(G)=\Phi(H)=R$ and $Soc(G/R)=W\times ER/R$. So $\tilde{F}(G)=W\times ER$
 and $\Phi(\tilde{F}(G))=1$. It means that
 $\tilde{F}(\tilde{F}(G))=Soc(\tilde{F}(G))=R\times W<\tilde{F}(G)$.
 This example  led us to the following definition.

\begin{defn}
Let $G$ be a finite group.   For any nonnegative integer $n$ define the subgroup $\tilde{F}^n(G)$
 by: $\tilde{F}^0(G)= G$ and $\tilde{F}^n(G)=\tilde{F}(\tilde{F}^{n-1}(G))$ for $n>0$.
\end{defn}

It is clear that $\tilde{F}^{i}(G)=\tilde{F}^{i-1}(G)=\tilde{F}(\tilde{F}^{i-1}(G))$ for some $i$.
 So we can define the subgroup $\tilde{F}^\infty(G)$ as the minimal subgroup in the series
 $G=\tilde{F}^0(G)\supseteq \tilde{F}^1(G)\supseteq\dots$. It is clear that
 $\tilde{F}^\infty(G)=\tilde{F}^\infty(\tilde{F}^\infty(G))$.

\begin{prop} Let $n$ be a natural number, $N$ and $H$  be  normal subgroups of a group $G$. Then

(1)  If $N\leq\Phi(\tilde{F}^{n-1}(G))$ then $\tilde{F}^n(G/N)=\tilde{F}^n(G)/N$;

(2)  $F^*(G)\subseteq \tilde{F}^n(G)$;

(3) If $\Phi(\tilde{F}^{n-1}(G))=1$ then $\tilde{F}^n(G)=F^*(G)$;

(4)  $C_{G}(\tilde{F}^n(G))\subseteq F(G)$;

(5)  $\tilde{F}^n(N)\leq \tilde{F}^n(G)$;

(6)  $\tilde{F}^n(G)N/N\leq \tilde{F}^n(G/N)$;

(7) If $G=N\times H$ then $\tilde{F}^n(G)=\tilde{F}^n(H)\times\tilde{F}^n(N)$.
\end{prop}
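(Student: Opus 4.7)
The plan is to prove each of the seven statements by induction on $n$, with the base case $n=1$ being the known properties of $\tilde{F}(\cdot)$ and the inductive step exploiting the recursive definition $\tilde{F}^n(G)=\tilde{F}(\tilde{F}^{n-1}(G))$. Two auxiliary facts will be used throughout: each $\tilde{F}^k(G)$ is characteristic in $G$ (so every subgroup inclusion among the iterates is normal in the ambient group), and $\tilde{F}^{n-1}(G)=\tilde{F}^{n-k}(\tilde{F}^{k-1}(G))$, both of which fall out of a direct induction on $k$.

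For (2), the known inclusion $F^*(G)\leq\tilde{F}(G)$ gives the base case; the inductive step uses the standard identity $F^*(K)=K\cap F^*(G)$ for $K\triangleleft G$, applied to $K=\tilde{F}^{n-1}(G)$, yielding $F^*(G)=F^*(\tilde{F}^{n-1}(G))\leq\tilde{F}(\tilde{F}^{n-1}(G))=\tilde{F}^n(G)$. Statement (3) then follows at once: $\Phi(\tilde{F}^{n-1}(G))=1$ forces $\tilde{F}^n(G)=Soc(\tilde{F}^{n-1}(G))\leq F^*(\tilde{F}^{n-1}(G))=F^*(G)$, and the reverse inclusion is (2). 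Statement (4) is immediate from (2) combined with Lemma 2.4: $C_G(\tilde{F}^n(G))\leq C_G(F^*(G))\leq F(G)$.

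For (5), (6), (7) I will run three parallel inductions based on the corresponding $n=1$ facts: $\tilde{F}(K)\leq\tilde{F}(G)$ for $K\triangleleft G$, the quotient inclusion $\tilde{F}(G)N/N\leq\tilde{F}(G/N)$, and the product formula $\tilde{F}(A\times B)=\tilde{F}(A)\times\tilde{F}(B)$. In (5) the step notes that $\tilde{F}^{n-1}(N)$ is characteristic in $N$, hence normal in $G$ and in $\tilde{F}^{n-1}(G)$, so applying the base case inside $\tilde{F}^{n-1}(G)$ lifts the inclusion one level up. In (6) I pass through the isomorphism $\tilde{F}^{n-1}(G)N/N\cong\tilde{F}^{n-1}(G)/(N\cap\tilde{F}^{n-1}(G))$, apply the base case of (6) inside $\tilde{F}^{n-1}(G)$, and then combine the inductive hypothesis of (6) with (5) inside $G/N$ to land in $\tilde{F}^n(G/N)$. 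In (7) the step is immediate once the base product formula is applied to $\tilde{F}^{n-1}(G)=\tilde{F}^{n-1}(H)\times\tilde{F}^{n-1}(N)$.

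Statement (1) is the most delicate. The key point is that, for every $k\leq n$, the subgroup $\tilde{F}^{n-1}(G)$ is characteristic in $\tilde{F}^{k-1}(G)$, and iterating the classical inclusion $\Phi(H)\leq\Phi(L)$ for $H\triangleleft L$ gives $N\leq\Phi(\tilde{F}^{n-1}(G))\leq\Phi(\tilde{F}^{k-1}(G))$. I then induct on $k$ to prove $\tilde{F}^k(G/N)=\tilde{F}^k(G)/N$, applying at each step the base identity $\tilde{F}(H/M)=\tilde{F}(H)/M$ (valid when $M\leq\Phi(H)$) with $H=\tilde{F}^{k-1}(G)$ and $M=N$. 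The hard part will not be conceptual but a matter of bookkeeping: one must carefully verify that the $\Phi$-condition survives each descent through the tower $G\geq\tilde{F}^1(G)\geq\cdots\geq\tilde{F}^{n-1}(G)$, and that the base identity for $\tilde{F}$ can legitimately be invoked at every level. Once this is tracked correctly, the induction is essentially mechanical.
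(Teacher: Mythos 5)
Your plan is essentially the paper's own proof: all seven parts are obtained by induction on $n$ anchored at the $n=1$ statements, and your inductive mechanics agree with the paper's throughout. In (2) the paper likewise passes through $F^{*}(\tilde{F}^{n-1}(G))=F^{*}(G)$, which is exactly your identity $F^{*}(K)=K\cap F^{*}(G)$ for $K\trianglelefteq G$ applied once $F^{*}(G)\le K$ is known; your (3) and (4) coincide with the paper's; in (5) the paper also applies the $n=1$ case inside $\tilde{F}^{n-1}(G)$ after noting $\tilde{F}^{n-1}(N)$ is normal there; and your route for (6) via $\tilde{F}^{n-1}(G)N/N\simeq\tilde{F}^{n-1}(G)/\bigl(N\cap\tilde{F}^{n-1}(G)\bigr)$ is an equivalent repackaging of the paper's chain $\tilde{F}(\tilde{F}^{n}(G/N))\ge\tilde{F}(\tilde{F}^{n}(G)N/N)\ge\tilde{F}(\tilde{F}^{n}(G)N)N/N\ge\tilde{F}(\tilde{F}^{n}(G))N/N$. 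Your handling of (1) — propagating $N\le\Phi(\tilde{F}^{n-1}(G))\le\Phi(\tilde{F}^{k-1}(G))$ down the tower via the classical $\Phi(H)\le\Phi(L)$ for $H\trianglelefteq L$, then inducting on $k$ — is precisely the bookkeeping the paper compresses into ``by induction,'' and it is correct.

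The one substantive difference is where you anchor the inductions. For (1), (3), (4), (5), (6) the $n=1$ facts are genuinely in the literature (the paper's Corollary 3.3 attributes them to \cite{30}, \cite{F1}, \cite{17}, \cite{F2}), so importing them is legitimate; likewise $F^{*}(G)\le\tilde{F}(G)$ is citable to \cite{33} and \cite{30}, even though the paper chooses to prove it from scratch via Shemetkov's argument with $E(G)$. But the $n=1$ case of (7), $\tilde{F}(N\times H)=\tilde{F}(N)\times\tilde{F}(H)$, carries no citation in the paper and is part of the proposition's content: the authors prove it by a minimal-counterexample argument, and your plan simply asserts it as ``the base product formula.'' You need to supply that proof — either directly, from $\Phi(N\times H)=\Phi(N)\times\Phi(H)$ together with $Soc(A\times B)=Soc(A)\times Soc(B)$, or as the paper does: in a minimal counterexample with $\Phi(G)\ne 1$, compare orders in $G/\Phi(G)$ using parts (1) and (5), and when $\Phi(G)=1$ reduce to the well-known $F^{*}(N\times H)=F^{*}(N)\times F^{*}(H)$. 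This is a small, fixable omission rather than a flaw in the strategy; with it filled in, your argument matches the paper's.
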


\begin{proof} (1) When $n=1$ it is directly follows from the definition of $\tilde{F}(G)$
 and $\Phi(G/N)=\Phi(G)/N$. By induction by $n$ we obtain this statement.

(2) The proof was proposed by L. Shemetkov  to the authors in case of $n=1$. Let a group $G$ be
 the minimal order counterexample for
(2).
If $\Phi(G)\neq E$ then for $G/\Phi(G)$ the statement is true. From
 $F^{*}(G)/\Phi(G)\subseteq F^{*}(G/\Phi(G))$ and  $\tilde{F}(G/\Phi(G))=\tilde{F}(G) /\Phi(G)$
we have that $F^{*}(G)\subseteq \tilde {F}(G)$. It is a contradiction with the choice of $G$.

Let $\Phi(G) = 1$. Now $\tilde{F}(G)= Soc(G)$. By 13.14.X  \cite{19}  $F^{*}(G) = E(G)F(G)$.
 Note $\Phi(E(G)) = 1$. Since 13.7.X  \cite{19}  $E(G)/Z(E(G))$ is the direct product of simple
 nonabelian groups, $Z(E(G)) = F(E(G))$. From it and theorem 10.6.A \cite{HD} we conclude that
 $E(G) = HZ(E(G))$ where $H$ is the complement to $Z(E(G))$ in $E(G)$. Now $H$ is the direct product
 of simple nonabelian groups. Since $H char E(G)\triangleleft G$, we have $H\triangleleft G$.
 Note  $H\subseteq Soc(G)$. Since  $Z(E(G))\subseteq F(G)\subseteq \tilde{F}(G)$
   and $H\subseteq Soc(G)$, it follows that
  $E(G)\subseteq \tilde{F}(G)$. Now $F^{*}(G) = E(G)F(G)\subseteq \tilde{F}(G)$.
 It is a contradiction with the choice of $G$.

Assume that $F^*(G)\subseteq \tilde{F}^n(G)$ for $n\geq 1$. It means that  $F^*(\tilde{F}^n(G))=F^*(G)$. By induction $F^*(G)\subseteq \tilde{F}^{n+1}(G)$.

(3) If $\Phi(\tilde{F}^{n-1}(G))=1$ then $\tilde{F}^n(G)$ is the socle of $\tilde{F}^{n-1}(G)$
 and hence $\tilde{F}^n(G)$ is quasinilpotent. From $F^*(G)\subseteq \tilde{F}^n(G)$ it follows
 that $\tilde{F}^n(G)=F^*(G)$.

(4) From $F^*(G)\leq \tilde{F}^n(G)$ it follows that $C_G(\tilde{F}^n(G))\leq C_G(F^*(G))$.
 Since $C_G(F^*(G))\leq F(G)$ by lemma 2.4, we see that $C_G(\tilde{F}^n(G))\leq F(G)$.

(5) Since $\Phi(N)\leq\Phi(G)$, we see that $\tilde{F}(G/\Phi(N))=\tilde{F}(G)/\Phi(N)$.
 Note that $\tilde{F}(N)/\Phi(N)$ is quasinilpotent. Hence $\tilde{F}(N)/\Phi(N)\subseteq F^*(G/\Phi(N))\subseteq\tilde{F}(G/\Phi(N))=\tilde{F}(G)/\Phi(N)$. Thus $\tilde{F}(N)\leq\tilde{F}(G)$. By induction $\tilde{F}^n(N)\leq\tilde{F}^n(G)$.

(6) Note that \[\tilde{F}(G)N/N/\Phi(G)N/N\simeq \tilde{F}(G)N/\Phi(G)N\simeq\tilde{F}(G)/\tilde{F}(G)\cap\Phi(G)N\]
 From $\Phi(G)\subseteq \tilde{F}(G)\cap\Phi(G)N$ it follows that $\tilde{F}(G)N/N/\Phi(G)N/N$
 is quasinilpotent. Since $\Phi(G)N/N\subseteq\Phi(G/N)$, we see that $\tilde{F}(G)N/N\leq\tilde{F}(G/N)$.

 Assume that      $\tilde{F}^n(G)N/N\leq\tilde{F}^n(G/N)$ for some $n\geq1$. Now
 \[\tilde{F}^{n+1}(G/N)=\tilde{F}(\tilde{F}^n(G/N))\geq\tilde{F}(\tilde{F}^n(G)N/N)\geq\]\[\geq\tilde{F}(\tilde{F}^n(G)N)N/N\geq \tilde{F}(\tilde{F}^n(G))N/N=\tilde{F}^{n+1}(G)N/N\] by the previous step and (5).

(7) Assume that the statement is false for $n=1$. Let a group $G$ be a counterexample of
 minimal order. Assume that $\Phi(G)\neq 1$.  Then
\[G/\Phi(G)= H\Phi(G)/\Phi(G)\times N\Phi(G)/\Phi(G)\] Note
 that \[ H\Phi(G)/\Phi(G) \simeq H/H\cap\Phi(G) = H/H\cap(\Phi(H)\times\Phi(N))=H/\Phi(H)\]
By analogy $N\Phi(G)/\Phi(G)\simeq N/\Phi(N)$. So
$\tilde{F}(G/\Phi(G))\simeq \tilde{F}(H/\Phi(H))\times\tilde{F}(N/\Phi(N))$.

From \[\tilde{F}(G/\Phi(G))=\tilde{F}(G)/\Phi(G), \, \tilde{F}(N/\Phi(N))=\tilde{F}(N)/\Phi(N) \, and \, \tilde{F}(H/\Phi(H))=\tilde{F}(H)/\Phi(H)\] it follows that
$\tilde{F}(G)/\Phi(G)\simeq \tilde{F}(H)/\Phi(H)\times\tilde{F}(N)/\Phi(N)$. Now
\[\frac{|\tilde{F}(G)|}{|\Phi(G)|}= \frac{|\tilde{F}(H)|}{|\Phi(H)|}\cdot\frac{|\tilde{F}(N)|}{|\Phi(N)|}\]

 From $\Phi(G)=\Phi(N)\times\Phi(H)$ it follows that $|\tilde{F}(G)|=|\tilde{F}(N)||\tilde{F}(H)|$.
 From (5) it follows that $\tilde{F}(N)\leq \tilde{F}(G)$ and $\tilde{F}(H)\leq \tilde{F}(G)$.
 Thus $\tilde{F}(G)=\tilde{F}(H)\times\tilde{F}(N)$, a contradiction.

  Now   $\Phi(G)= 1$. So $\tilde{F}(G)=F^*(G)$, $\tilde{F}(N)=F^*(N)$ and $\tilde{F}(H)=F^*(H)$.
 It is well known that $F^*(H\times N)=F^*(H)\times F^*(N)$, the final contradiction.

  By induction we have that
$\tilde{F}^n(G)=\tilde{F}^n(H)\times\tilde{F}^n(N)$.
  \end{proof}

From proposition 3.2 follow  properties of $\tilde{F}(G)$.

\begin{cor} Let $N$ and $H$  be  normal subgroups of a group $G$. Then

(1) \cite{30} If $N\leq\Phi(G)$ then $\tilde{F}(G/N)=\tilde{F}(G)/N$;

(2) \cite{33},\cite{30} $F^*(G)\subseteq \tilde{F}(G)$;

(3) \cite{F1} If $\Phi(G)=1$ then $\tilde{F}(G)=F^*(G)$;

(4) \cite{17} $C_{G}(\tilde{F}(G))\subseteq \tilde{F}(G)$;

(5) \cite{F2} $\tilde{F}(N)\leq \tilde{F}(G)$;

(6) \cite{F2} $\tilde{F}(G)N/N\leq \tilde{F}(G/N)$;

(7) If $G=N\times H$ then $\tilde{F}(G)=\tilde{F}(H)\times\tilde{F}(N)$.
\end{cor}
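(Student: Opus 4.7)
The plan is to derive Corollary~3.3 directly from Proposition~3.2 by specializing to $n=1$, with one small extra observation needed for clause~(4). Since by definition $\tilde{F}^0(G)=G$ and $\tilde{F}^1(G)=\tilde{F}(G)$, the hypothesis ``$N\leq\Phi(\tilde{F}^{n-1}(G))$'' in Proposition~3.2(1) collapses to ``$N\leq\Phi(G)$'', and similarly ``$\Phi(\tilde{F}^{n-1}(G))=1$'' in Proposition~3.2(3) collapses to ``$\Phi(G)=1$''. With these substitutions, clauses (1), (2), (3), (5), (6) and (7) of the proposition translate verbatim into the corresponding clauses of the corollary, so no additional work is needed for these six items beyond pointing at the proposition.

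The only clause that requires a small extra argument is (4). Proposition~3.2(4) actually yields the stronger inclusion $C_G(\tilde{F}(G))\subseteq F(G)$, whereas the corollary only asserts $C_G(\tilde{F}(G))\subseteq \tilde{F}(G)$. To bridge the two I would invoke $F(G)\subseteq \tilde{F}(G)$, which is immediate from the definition of $\tilde{F}(G)$: we have $\Phi(G)\leq F(G)$, and $F(G)/\Phi(G)$ is a normal nilpotent subgroup of $G/\Phi(G)$, hence (being a direct product of elementary abelian $p$-groups) it lies in $Soc(G/\Phi(G))=\tilde{F}(G)/\Phi(G)$. Chaining the two inclusions gives $C_G(\tilde{F}(G))\subseteq F(G)\subseteq \tilde{F}(G)$.

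There is no serious obstacle: the ``hard'' part of each clause has already been absorbed into the proof of Proposition~3.2, and the corollary is in essence a reader-friendly restatement of the $n=1$ case. The only mild care needed is bookkeeping around the indexing convention $\tilde{F}^0(G)=G$ and the observation that clause~(4) of the corollary is strictly weaker than clause~(4) of the proposition, so the sole genuinely new input is the elementary containment $F(G)\subseteq \tilde{F}(G)$.
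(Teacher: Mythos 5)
Your proposal coincides with the paper's treatment: the paper gives no separate proof of the corollary, stating only that it follows from Proposition~3.2, i.e.\ the specialization $n=1$, exactly as you argue, and your extra bridge $F(G)\subseteq\tilde{F}(G)$ for clause~(4) is correct. One small caution: your parenthetical justification is imprecise, since a normal subgroup being a direct product of elementary abelian $p$-groups does not by itself place it in the socle --- what you actually need is Gasch\"utz's theorem that $F(G)/\Phi(G)\leq Soc(G/\Phi(G))$ (quoted as property~(3) of $F(G)$ in the paper's introduction), or, more economically, the chain $F(G)\leq F^{*}(G)\leq\tilde{F}(G)$ obtained from clause~(2) and the definition of $F^{*}(G)$.
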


Also we obtain new properties of $\tilde{F}^\infty(G)$.

\begin{cor} Let $N$  and $H$  be  normal subgroups of a group $G$. Then

(1) $\tilde{F}^\infty(G)/\Phi(\tilde{F}^\infty(G))$ is quasinilpotent.

(2)  $F^*(G)\subseteq \tilde{F}^\infty(G)\subseteq\tilde{F}(G)$;

(3) If $\Phi(\tilde{F}^\infty(G))=1$ then $\tilde{F}^\infty(G)=F^*(G)$;

(4)  $C_{G}(\tilde{F}^\infty(G))\subseteq F(G)$;

(5)  $\tilde{F}^\infty(N)\leq \tilde{F}^\infty(G)$;

(6)  $\tilde{F}^\infty(G)N/N\leq \tilde{F}^\infty(G/N)$;

(7)  If $G=N\times H$ then $\tilde{F}^\infty(G)=\tilde{F}^\infty(H)\times\tilde{F}^\infty(N)$.
\end{cor}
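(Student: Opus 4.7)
The plan is to reduce every statement to an application of Proposition~3.2 by exploiting the fact that, since $G$ is finite, the chain $G=\tilde{F}^0(G)\supseteq\tilde{F}^1(G)\supseteq\cdots$ stabilizes: there exists an integer $n$ with $\tilde{F}^n(G)=\tilde{F}^\infty(G)$. Whenever several such chains are involved (for $G$, for $N$, for $H$, and for $G/N$), I pick $n$ large enough that it simultaneously stabilizes all of them. Once this uniform $n$ is fixed, each of the seven assertions drops out by rewriting $\tilde{F}^\infty$ as $\tilde{F}^n$.

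For (1), set $H=\tilde{F}^\infty(G)$. By construction $\tilde{F}(H)=H$, so the defining relation of $\tilde{F}$ gives $H/\Phi(H)=Soc(H/\Phi(H))$. A socle is a direct product of minimal normal subgroups, each of which is either elementary abelian or a direct product of isomorphic non-abelian simple groups; by the $N_0$-closedness of $\mathfrak{N}^*$, such a product is quasinilpotent. For (2), Proposition~3.2(2) at the stabilizing $n$ gives $F^*(G)\subseteq \tilde{F}^n(G)=\tilde{F}^\infty(G)$, while $\tilde{F}^\infty(G)\subseteq\tilde{F}^1(G)=\tilde{F}(G)$ is immediate from the descending series. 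For (3), if $\Phi(\tilde{F}^\infty(G))=1$ then, taking $n$ with $\tilde{F}^n(G)=\tilde{F}^\infty(G)=\tilde{F}^{n+1}(G)$, the hypothesis of Proposition~3.2(3) is met (with $n$ replaced by $n+1$), yielding $\tilde{F}^\infty(G)=\tilde{F}^{n+1}(G)=F^*(G)$. For (4), combining (2) with Lemma~2.4 gives $C_G(\tilde{F}^\infty(G))\subseteq C_G(F^*(G))\subseteq F(G)$.

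For (5), (6), (7), I choose $n$ so large that $\tilde{F}^n$ stabilizes for every subgroup or quotient under consideration: for (5), both $\tilde{F}^n(G)=\tilde{F}^\infty(G)$ and $\tilde{F}^n(N)=\tilde{F}^\infty(N)$, whence Proposition~3.2(5) yields $\tilde{F}^\infty(N)=\tilde{F}^n(N)\leq\tilde{F}^n(G)=\tilde{F}^\infty(G)$. For (6), require in addition $\tilde{F}^n(G/N)=\tilde{F}^\infty(G/N)$ and apply Proposition~3.2(6). For (7), require $\tilde{F}^n(G)$, $\tilde{F}^n(H)$ and $\tilde{F}^n(N)$ to all coincide with their respective $\tilde{F}^\infty$ and invoke Proposition~3.2(7), which gives the direct decomposition.

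There is no genuine obstacle: the only point requiring a moment of care is the synchronization of the stabilizing index across the several chains appearing in (5)--(7), and this is trivially arranged by taking the maximum of finitely many stabilizing indices. The one piece of structural content used beyond Proposition~3.2 is the observation, needed for (1), that any socle is quasinilpotent, which is standard.
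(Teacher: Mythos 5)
Your proof is correct and is exactly the route the paper intends: the paper gives no separate proof for this corollary, presenting it as an immediate consequence of Proposition~3.2 once one notes that the descending chain $G=\tilde{F}^0(G)\supseteq\tilde{F}^1(G)\supseteq\cdots$ stabilizes at $\tilde{F}^\infty(G)$, which is precisely your reduction (including the harmless synchronization of stabilizing indices in (5)--(7), and the standard observation for (1) that a socle, equivalently $F^*$ of a group with trivial Frattini subgroup, is quasinilpotent).
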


Let consider another direction of generalization of the Fitting subgroup. A subgroup functor
 $\tau$ is called $m$-functor if $\tau(G)$ contains $G$ and some maximal subgroups of $G$ for
 every group $G$. Recall (\cite{41}, p.198) that $\Phi_\tau(G)$ is the intersection of all
 subgroups from $\tau(G)$.

\begin{defn}
Let $\tau$ be $m$-functor.  For every group $G$ subgroup $\tilde{F}_\tau(G)$ is defined by

1) $\Phi_\tau(G)\subseteq\tilde{F}_\tau(G)$;

2) $\tilde{F}_\tau(G)/\Phi_\tau(G)=Soc(G/\Phi_\tau(G))$.
\end{defn}

 If $\tau(G)$ is the set of all maximal  subgroups of $G$ for any group $G$ then we obtain
 the definition of $\tilde{F}(G)$.  If $\tau(G)$ is the set of all maximal abnormal subgroups
 for any group $G$ then  $\Phi_\tau(G)=\Delta(G)$. Subgroup $\tilde{F}_\tau(G)=\tilde{F}_\Delta(G)$
 was introduced by  M. Selkin and R. Borodich \cite{36}.

\begin{prop} Let  $G$ be a group. Then $\Delta(G)\subseteq\tilde{F}(G)$ and
 $\tilde{F}(G/\Delta(G))=\tilde{F}(G)/\Delta(G)$.\end{prop}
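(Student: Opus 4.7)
The plan is to handle the two assertions in order, with the first feeding into the second. Throughout, I pass to $\bar G=G/\Phi(G)$ and write $\bar Z=\Delta(G)/\Phi(G)$, which equals $Z(\bar G)$ by Lemma~2.1; in particular $\Phi(\bar G)=1$.

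For the inclusion $\Delta(G)\subseteq \tilde F(G)$, the subgroup $\bar Z$ is abelian and normal in $\bar G$, so $\bar Z\leq F(\bar G)$. By the standard fact that $\Phi(N)\leq \Phi(G)$ for any normal nilpotent subgroup $N$ of a finite group $G$, we have $\Phi(F(\bar G))\leq\Phi(\bar G)=1$, so $F(\bar G)$ is a direct product of elementary abelian $p$-groups whose characteristic direct factors are normal in $\bar G$. Hence $F(\bar G)\leq Soc(\bar G)=\tilde F(G)/\Phi(G)$, and pulling back yields $\Delta(G)\leq\tilde F(G)$.

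For the equality $\tilde F(G/\Delta(G))=\tilde F(G)/\Delta(G)$, I first verify $\Phi(G/\Delta(G))=1$. Every abnormal maximal subgroup $M$ of $G$ contains $\Delta(G)$ by definition, so the $M/\Delta(G)$ are maximal subgroups of $G/\Delta(G)$ whose intersection is already $\Delta(G)/\Delta(G)=1$; therefore $\Phi(G/\Delta(G))=1$ and $\tilde F(G/\Delta(G))=Soc(G/\Delta(G))$. Under the canonical isomorphism $G/\Delta(G)\simeq \bar G/\bar Z$, the subgroup $\tilde F(G)/\Delta(G)$ corresponds to $Soc(\bar G)/\bar Z$, so the claim reduces to $Soc(\bar G/\bar Z)=Soc(\bar G)/\bar Z$.

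The inclusion $Soc(\bar G)/\bar Z\subseteq Soc(\bar G/\bar Z)$ is routine: for any minimal normal subgroup $A$ of $\bar G$ with $A\not\leq\bar Z$, minimality forces $A\cap\bar Z=1$, and a direct check shows $A\bar Z/\bar Z\cong A$ remains minimal normal in $\bar G/\bar Z$. The main obstacle is the reverse inclusion: given a minimal normal subgroup $L/\bar Z$ of $\bar G/\bar Z$, one must show $L\leq Soc(\bar G)$. I split on the isomorphism type of $L/\bar Z$. If $L/\bar Z$ is elementary abelian, then $\bar Z\leq Z(L)$ forces $L$ to be nilpotent of class at most $2$, hence $L\leq F(\bar G)\leq Soc(\bar G)$ by the argument of the first part. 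If instead $L/\bar Z$ is a direct product of non-abelian simple groups, then $L/\bar Z$ is perfect and $L=[L,L]\bar Z$; a short commutator calculation gives $[L,L]$ perfect with $[L,L]\cap\bar Z\leq Z([L,L])$, so $[L,L]$ is a central product of components of $\bar G$. Therefore $[L,L]\leq E(\bar G)\leq F^{*}(\bar G)=\tilde F(\bar G)=Soc(\bar G)$ (the last two equalities from Corollary~3.3(3) and $\Phi(\bar G)=1$), and finally $L=[L,L]\bar Z\leq Soc(\bar G)$, completing the argument.
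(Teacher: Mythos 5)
Your proof is correct, but it follows a genuinely different route from the paper's. The paper runs a minimal-counterexample induction: when $\Phi(G)\neq 1$ it applies the inductive hypothesis to $G/\Phi(G)$, transports it down via Lemma 2.1 and Proposition 3.2(1), and converts the resulting equality of orders $|\tilde{F}(G)/\Delta(G)|=|\tilde{F}(G/\Delta(G))|$ into equality of subgroups using the containment from Proposition 3.2(6); in the base case $\Phi(G)=1$ it notes that $\Delta(G)=Z(G)\leq Soc(G)$ is complemented, writes $G=M\times\Delta(G)$, and finishes with the direct-product formula of Proposition 3.2(7). You dispense with induction altogether: a single pass to $\bar G=G/\Phi(G)$ reduces the whole proposition to the clean statement $Soc(\bar G/Z(\bar G))=Soc(\bar G)/Z(\bar G)$ for $\Phi(\bar G)=1$, which you prove by classifying the minimal normal subgroups of $\bar G/Z(\bar G)$: the abelian ones pull back into $F(\bar G)$, and the nonabelian ones pull back, after replacing $L$ by its perfect derived subgroup (a central product of components of $\bar G$), into $E(\bar G)\leq F^{*}(\bar G)=Soc(\bar G)$ via Corollary 3.3(3). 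Your route avoids the order-counting and produces a reusable structural lemma about socles modulo the center, at the price of invoking component theory; the paper's route stays entirely inside its own $\tilde{F}$-calculus (3.2(1),(6),(7)) and needs no $E(G)$-machinery.

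One justification needs tightening. In the first part you infer $F(\bar G)\leq Soc(\bar G)$ from $F(\bar G)$ being a direct product of elementary abelian normal subgroups; that inference is not valid on its own --- the Klein four subgroups of $D_8$ are elementary abelian and normal but do not lie in $Soc(D_8)=Z(D_8)$. What makes the conclusion true here is $\Phi(\bar G)=1$: either quote Gaschutz's theorem that $F(\bar G)$ is then the product of the abelian minimal normal subgroups of $\bar G$, or, more in the spirit of your own case (b), use $F(\bar G)\leq F^{*}(\bar G)=\tilde{F}(\bar G)=Soc(\bar G)$ from Corollary 3.3(3). The same patch underwrites the appeal to ``the argument of the first part'' in your abelian case; and for the containment $\Delta(G)\leq\tilde{F}(G)$ alone there is an even quicker argument: $\bar Z$ is central and elementary abelian (since $\Phi(\bar Z)\leq\Phi(\bar G)=1$), hence generated by prime-order subgroups, each of which is minimal normal in $\bar G$, so $\bar Z\leq Soc(\bar G)$ directly.
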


\begin{proof} From lemma 2.1 it follows that $\Delta(G)\subseteq\tilde{F}(G)$. Let a group $G$ be
 a counterexample of minimal order to the second statement of proposition. Assume that $\Phi(G)\neq 1$.
  By inductive hypothesis
 \[\tilde{F}((G/\Phi(G))/\Delta(G/\Phi(G)))=\tilde{F}(G/\Phi(G))/\Delta(G/\Phi(G))\]
 Now $\Delta(G/\Phi(G))=\Delta(G)/\Phi(G)$ by
 lemma 2.1
 and $\tilde{F}(G/\Phi(G))=\tilde{F}(G)/\Phi(G)$ by proposition 3.2 (1).
\[\tilde{F}((G/\Phi(G))/\Delta(G/\Phi(G)))=
\tilde{F}(G/\Phi(G)/\Delta(G)/\Phi(G))\simeq\tilde{F}(G/\Delta(G))\]
   \[\tilde{F}(G/\Phi(G))/\Delta(G/\Phi(G))=\tilde{F}(G)/\Phi(G)/\Delta(G)/
\Phi(G)\simeq\tilde{F}(G)/\Delta(G)\]

Hence $|\tilde{F}(G)/\Delta(G)|=|\tilde{F}(G/\Delta(G))|$. By 3.2 (6) it follows that
 $\tilde{F}(G)/\Delta(G)\leq\tilde{F}(G/\Delta(G))$. Thus
 $\tilde{F}(G)/\Delta(G)=\tilde{F}(G/\Delta(G))$, a contradiction.

Now $\Phi(G)=1$. It means that $\Delta(G)=Z(G)\leq Soc(G)$. Now $Z(G)$ is
 complemented in $G$. Let $M$ be a complement of $Z(G)$ in $G$. Hence $M\triangleleft G$
 and $G=M\times Z(G)=M\times \Delta(G)$.  Note that $M\simeq G/\Delta(G)$. From proposition 3.2 (7)
 it follows that $\tilde{F}(G)=\tilde{F}(M)\times \tilde{F}(\Delta(G))=\tilde{F}(M)\times \Delta(G)$.
 Thus $\tilde{F}(G/\Delta(G))\simeq\tilde{F}(M)\simeq\tilde{F}(G)/\Delta(G)$. From proposition
 3.2 (6) we obtain the final contradiction. \end{proof}

\begin{cor}
Let $G$ be a group. Then $\tilde{F}_\Delta(G)=\tilde{F}(G)$.
\end{cor}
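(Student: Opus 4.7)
The plan is to verify directly that $\tilde{F}(G)$ satisfies the two defining conditions of $\tilde{F}_\Delta(G)$: (i) $\Delta(G)\subseteq \tilde{F}(G)$, and (ii) $\tilde{F}(G)/\Delta(G)=Soc(G/\Delta(G))$. Condition (i) is the opening assertion of Proposition 3.5 and requires no further work, so the substance lies in proving (ii).

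For (ii), I would first invoke the second assertion of Proposition 3.5 to rewrite $\tilde{F}(G)/\Delta(G)$ as $\tilde{F}(G/\Delta(G))$. The problem thereby reduces to proving $\tilde{F}(H)=Soc(H)$ for $H:=G/\Delta(G)$, and since $\tilde{F}(H)/\Phi(H)=Soc(H/\Phi(H))$ by the very definition of $\tilde{F}$, this is equivalent to the single Frattini vanishing statement $\Phi(G/\Delta(G))=1$. To establish the latter, I would pass to $\bar{G}:=G/\Phi(G)$, for which Lemma 2.1 gives $\Delta(\bar{G})=Z(\bar{G})=\Delta(G)/\Phi(G)$ and hence $G/\Delta(G)\simeq \bar{G}/Z(\bar{G})$. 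Because $\Phi(\bar{G})=1$, the abelian normal subgroup $Z(\bar{G})$ is complemented in $\bar{G}$, so one can write $\bar{G}=M\times Z(\bar{G})$ and exhibit $G/\Delta(G)\simeq M$ as a direct factor of $\bar{G}$; then $\Phi(M)\subseteq \Phi(\bar{G})=1$ closes the Frattini claim, and combining with the previous reduction delivers $\tilde{F}(G)=\tilde{F}_\Delta(G)$.

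The step I expect to be the main obstacle is the Frattini triviality $\Phi(G/\Delta(G))=1$; the surrounding manipulations are essentially cosmetic once Proposition 3.5 is available. Even this obstacle is largely defused, because the complementability of $Z(\bar{G})$ when $\Phi(\bar{G})=1$ is precisely the ingredient already used in the final paragraph of the proof of Proposition 3.5, so the required tool is on the table and the proof should be short.
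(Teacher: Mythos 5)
Your proof is correct. The paper itself states this corollary without proof, as an immediate consequence of Proposition 3.5, and your overall skeleton --- check that $\tilde{F}(G)$ satisfies the two defining conditions of $\tilde{F}_\Delta(G)$, use Proposition 3.5 to rewrite $\tilde{F}(G)/\Delta(G)$ as $\tilde{F}(G/\Delta(G))$, and reduce everything to $\Phi(G/\Delta(G))=1$ --- is exactly the intended derivation. Where you genuinely diverge is in how you prove the Frattini vanishing: you route it through Lemma 2.1 and the Gasch\"{u}tz complementation of $Z(G/\Phi(G))$, realizing $G/\Delta(G)$ as a direct factor $M$ of $\bar{G}=G/\Phi(G)$ with $\Phi(M)\subseteq\Phi(\bar{G})=1$. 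This is sound (the complement of a central subgroup is normal, since conjugation by any $mz$ with $z$ central fixes $M$, so the decomposition $\bar{G}=M\times Z(\bar{G})$ is legitimate --- the same device the paper uses in the final paragraph of the proof of Proposition 3.5), but it is heavier machinery than the situation requires: $\Delta(G)$ is by definition the intersection of the abnormal maximal subgroups of $G$, each of which contains $\Delta(G)$, so their images in $G/\Delta(G)$ are maximal subgroups intersecting trivially, whence $\Phi(G/\Delta(G))=1$ in one line. This is precisely the observation, valid for an arbitrary $m$-functor $\tau$ (``$\Phi(G/\Phi_\tau(G))=1$''), that the authors record in the proof of Proposition 3.7; your argument, by contrast, exploits the specific structure of $\Delta(G)$ via Lemma 2.1 and would not generalize to other $m$-functors. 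Two minor points: your claim that $\tilde{F}(H)=Soc(H)$ is ``equivalent'' to $\Phi(H)=1$ overstates matters --- only the implication $\Phi(H)=1\Rightarrow\tilde{F}(H)=Soc(H)$ is immediate from the definition, and that is the only direction you use; and in the degenerate case where $G$ has no abnormal maximal subgroups one has $\Delta(G)=G$ and the quotient is trivial, which both arguments handle without difficulty.
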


\begin{prop} Let $G$ be a group. Then $\tilde{F}_\tau(G)\geq\tilde{F}(G)$. In particular,
 $C_G(\tilde{F}_\tau(G))\leq\tilde{F}_\tau(G)$.\end{prop}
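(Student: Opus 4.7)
The plan is to exhibit $\tilde F(G)$ inside $\tilde F_\tau(G)$ by pushing the defining socle quotient of $\tilde F(G)$ forward along the natural surjection $G/\Phi(G)\twoheadrightarrow G/\Phi_\tau(G)$, and then deduce the centralizer statement from Corollary 3.3(4).

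First I would record two bookkeeping facts about $\Phi_\tau$. Since $\tau$ is an $m$-functor, every member of $\tau(G)$ is either $G$ itself or a maximal subgroup of $G$; hence $\Phi_\tau(G)=\bigcap\tau(G)$ is an intersection taken over a subfamily of $\{G\}\cup\{\text{maximal subgroups of }G\}$, so $\Phi(G)\subseteq\Phi_\tau(G)$. Moreover, because the functor property $f(\tau(G))=\tau(f(G))$ applied to automorphisms $f$ of $G$ shows $\tau(G)$ is $\mathrm{Aut}(G)$-invariant, $\Phi_\tau(G)$ is characteristic, hence normal, in $G$. In particular the surjection $\pi\colon G/\Phi(G)\to G/\Phi_\tau(G)$ is well-defined.

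The main step is to show $\pi\bigl(\mathrm{Soc}(G/\Phi(G))\bigr)\subseteq\mathrm{Soc}(G/\Phi_\tau(G))$. For this I would establish the small general lemma: \emph{if $N$ is a minimal normal subgroup of a group $H$ and $L\triangleleft H$, then $NL/L$ is either trivial or a minimal normal subgroup of $H/L$.} The proof is a one-line application of Dedekind's modular law: any normal subgroup of $H/L$ contained in $NL/L$ has the form $P/L$ with $L\le P\le NL$ and $P\triangleleft H$; Dedekind's law gives $P=L(P\cap N)$, and $P\cap N\triangleleft H$ with $P\cap N\le N$ forces $P\cap N\in\{1,N\}$ by minimality, so $P/L\in\{1,NL/L\}$. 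Writing $\mathrm{Soc}(G/\Phi(G))$ as the product of its minimal normal subgroups and applying this lemma to each factor, the image under $\pi$ is a product of (trivial or) minimal normal subgroups of $G/\Phi_\tau(G)$, hence lies in $\mathrm{Soc}(G/\Phi_\tau(G))$.

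Translating back through the definitions, this inclusion reads $\tilde F(G)\Phi_\tau(G)/\Phi_\tau(G)\subseteq\tilde F_\tau(G)/\Phi_\tau(G)$, and since $\Phi_\tau(G)\subseteq\tilde F_\tau(G)$ by definition, we conclude $\tilde F(G)\subseteq\tilde F_\tau(G)$. The "in particular" clause is then immediate: the inclusion gives $C_G(\tilde F_\tau(G))\subseteq C_G(\tilde F(G))$, and Corollary 3.3(4) yields $C_G(\tilde F(G))\subseteq\tilde F(G)\subseteq\tilde F_\tau(G)$, so $C_G(\tilde F_\tau(G))\subseteq\tilde F_\tau(G)$.

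The only nontrivial point is the Dedekind-law lemma on images of minimal normal subgroups; everything else is matching the two socle-based definitions through the quotient $G/\Phi(G)\twoheadrightarrow G/\Phi_\tau(G)$, which is available precisely because $\Phi(G)\subseteq\Phi_\tau(G)$. No inductive argument on $|G|$ seems necessary.
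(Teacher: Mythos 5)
Your proof is correct, and it takes a genuinely different route from the paper's. Both arguments hinge on the same displayed inclusion $\tilde F(G)\Phi_\tau(G)/\Phi_\tau(G)\leq Soc(G/\Phi_\tau(G))$, but the mechanisms differ. The paper gets it from its quasinilpotency machinery: since $\tilde F(G)/\Phi(G)$ is quasinilpotent, so is its epimorphic image $\tilde F(G)\Phi_\tau(G)/\Phi_\tau(G)$; then, because the maximal subgroups in $\tau(G)$ contain $\Phi_\tau(G)$ and intersect in it, $\Phi(G/\Phi_\tau(G))=1$, so Proposition 3.2(3) identifies $Soc(G/\Phi_\tau(G))$ with $F^*(G/\Phi_\tau(G))$, the largest normal quasinilpotent subgroup, which must absorb the image. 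You instead push the socle forward directly: your Dedekind-law lemma (image of a minimal normal subgroup modulo a normal subgroup is trivial or minimal normal -- this is a standard fact, and your modular-law verification of it is airtight, since $L\le P\le NL$ and $P\triangleleft H$ force $P=(P\cap N)L\in\{L,NL\}$) gives $Soc(H)L/L\leq Soc(H/L)$ for any $L\triangleleft H$, applied with $H=G/\Phi(G)$ and $L=\Phi_\tau(G)/\Phi(G)$. What each approach buys: the paper's proof is a two-line corollary of its already-established Proposition 3.2, whereas yours is self-contained and elementary, never invokes $F^*$ or quasinilpotency for the main inclusion, and in fact proves the slightly stronger statement that $\tilde F(G)N/N\leq Soc(G/N)$ for \emph{every} normal $N\supseteq\Phi(G)$ -- you never need $\Phi(G/\Phi_\tau(G))=1$, which is essential to the paper's argument. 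You are also more careful than the paper in recording that $\Phi(G)\subseteq\Phi_\tau(G)$ and that $\Phi_\tau(G)$ is normal (characteristic, via the functor property), both of which the paper uses tacitly. The centralizer clause is handled essentially identically in the two proofs: you chain $C_G(\tilde F_\tau(G))\leq C_G(\tilde F(G))\leq\tilde F(G)\leq\tilde F_\tau(G)$ via Corollary 3.3(4), while the paper cites Proposition 3.2(4), which gives the same conclusion through $F(G)$.
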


\begin{proof} From proposition 3.2 it follows that if $N\triangleleft G$ and $\Phi(G)\subseteq N$
 then $\tilde{F}(G)N/N$ is quasinilpotent. Also note that $\Phi(G/\Phi_\tau(G))=1$.
 By proposition 3.2 $F^*(G/\Phi_\tau(G))=Soc(G/\Phi_\tau(G))$.
 Hence $\tilde{F}(G)\Phi_\tau(G)/\Phi_\tau(G)\leq Soc(G/\Phi_\tau(G))$.
 Thus $\tilde{F}_\tau(G)\geq\tilde{F}(G)$. The second statement follows from
 (4) of proposition 3.2.\end{proof}

\textbf{Problem 1.} Is there a natural number $n$ such that $\tilde{F}^n(G)=\tilde{F}^\infty(G)$
 for any group $G$?

\section{Influence of generalized Fitting subgroups on the structure of groups}

It is well known that a group $G$ is nilpotent if and only if every maximal subgroup of $G$
 is normal in $G$.

 \begin{thm} A group $G$ is nilpotent if and only if every maximal subgroup of $G$ is
 $\tilde{F}(G)$-subnormal.\end{thm}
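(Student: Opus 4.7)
The forward direction is immediate: if $G$ is nilpotent, every subgroup of $G$ is subnormal in $G$, and hence any maximal subgroup $M$ is automatically subnormal in the intermediate group $\langle M,\tilde{F}(G)\rangle$. So the real content lies in the converse, which I plan to attack by a minimal counterexample argument, reducing to the case $\Phi(G)=1$ and then squeezing $\tilde{F}(G)$ into the center.

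Assume every maximal subgroup of $G$ is $\tilde{F}(G)$-subnormal and let $G$ be a non-nilpotent group of least order with this property. The first step is to show $\Phi(G)=1$. Every maximal subgroup of $G/\Phi(G)$ has the form $M/\Phi(G)$ with $M$ maximal in $G$, and by Corollary 3.3(1) we have $\tilde{F}(G/\Phi(G))=\tilde{F}(G)/\Phi(G)$. Since subnormality is preserved under quotients, the hypothesis passes to $G/\Phi(G)$; minimality forces $G/\Phi(G)$ to be nilpotent, whence $G$ is nilpotent, a contradiction. So $\Phi(G)=1$.

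Next I would exploit the hypothesis to control non-normal maximal subgroups. Let $M$ be a maximal subgroup of $G$ that is not normal. If $\tilde{F}(G)\not\leq M$, then maximality gives $\langle M,\tilde{F}(G)\rangle=G$, so $M$ is subnormal in $G$; but a maximal subnormal subgroup is normal, contradicting the choice of $M$. Hence every non-normal maximal subgroup of $G$ contains $\tilde{F}(G)$. If there are no non-normal maximal subgroups, then $G$ is nilpotent, a contradiction; otherwise the intersection of all non-normal (equivalently, abnormal) maximal subgroups is $\Delta(G)$, and $\tilde{F}(G)\leq\Delta(G)$.

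The final step is the center squeeze. By Lemma 2.1, with $\Phi(G)=1$, $\Delta(G)=Z(G)$, so $\tilde{F}(G)\leq Z(G)$; in particular every element of $G$ centralizes $\tilde{F}(G)$, giving $G=C_G(\tilde{F}(G))$. But Corollary 3.3(4) says $C_G(\tilde{F}(G))\leq\tilde{F}(G)$, whence $G=\tilde{F}(G)\leq Z(G)$, so $G$ is abelian and thus nilpotent, the desired contradiction. The only real obstacle is the reduction to $\Phi(G)=1$, since one has to verify that the $\tilde{F}(G)$-subnormality hypothesis is well-behaved under factoring by $\Phi(G)$; this is handled precisely by Corollary 3.3(1), which is why the Frattini reduction is clean here and does not require the more delicate inequality of Corollary 3.3(6).
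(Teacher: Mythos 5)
Your proof is correct, and its skeleton (minimal counterexample, Frattini reduction via Corollary 3.3(1), then showing every abnormal maximal subgroup contains $\tilde{F}(G)$ so that $\tilde{F}(G)\leq\Delta(G)$) coincides with the paper's. Where you genuinely diverge is the endgame. The paper, having obtained $\tilde{F}(G)\leq\Delta(G)$ with $\Phi(G)=1$, reads Lemma 2.1 as $\Delta(G)=Z_\infty(G)$ and then runs the hypercenter machinery: $G^{\mathfrak{N}}\leq C_G(Z_\infty(G))\leq C_G(\tilde{F}(G))\leq\tilde{F}(G)\leq Z_\infty(G)$ (Lemma 2.2 plus $C_G(\tilde{F}(G))\leq\tilde{F}(G)$), and concludes nilpotency from Lemma 2.3. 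You instead read the same Lemma 2.1 as $\Delta(G)=Z(G)$, so $\tilde{F}(G)\leq Z(G)$ forces $C_G(\tilde{F}(G))=G$, and Corollary 3.3(4) then gives $G=\tilde{F}(G)\leq Z(G)$, i.e.\ the minimal counterexample is abelian --- an immediate contradiction. Your finish is more elementary: it dispenses with Lemmas 2.2 and 2.3 (the nilpotent residual and hypercenter) entirely, using only the self-centralizing property of $\tilde{F}(G)$, and it extracts a nominally stronger conclusion (abelian, not merely nilpotent) from the counterexample. The paper's route, while heavier here, showcases the $Z_\infty$/$G^{\mathfrak{N}}$ technique that it reuses verbatim in the proof of Theorem 4.3(5)$\Rightarrow$(1), where the subgroup in question ($F^*(G)$) lands only in $Z_\infty(G)$ and not in $Z(G)$, so your shortcut would not transfer there. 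Minor cosmetic points, not gaps: your Frattini step tacitly uses the standard fact that $G/\Phi(G)$ nilpotent implies $G$ nilpotent (the paper cites Doerk--Hawkes, Theorem 9.3(b)), and your identification of non-normal maximal subgroups with abnormal ones is the standard dichotomy the paper's definition of $\Delta(G)$ presupposes.
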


 \begin{proof}  Let $G$ be a nilpotent group. It is clear that every maximal subgroup of $G$
 is $\tilde{F}(G)$-subnormal.

  Conversely. Assume the theorem is false and let $G$ have minimal order among the
 nonnilpotent groups whose maximal subgroups are  $\tilde{F}(G)$-subnormal.

 Let $\Phi(G)\neq 1$.  Since $ \tilde {F} (G / \Phi (G)) = \tilde {F} (G) / \Phi (G) $, we
 see that every maximal subgroup of $ G / \Phi (G) $ is $ \tilde {F} (G / \Phi (G)) $-subnormal.
 Since $ |G|>|G/\Phi(G)|$, we have $G/\Phi(G)$ is nilpotent. From theorem 9.3(b) (\cite{HD}, p.30)
 it follows that $G$ is nilpotent, a contradiction.

Assume that $ \Phi (G) = 1 $. Then   $ \tilde {F} (G) = Soc (G) $. Let $ M $ be a abnormal maximal
  subgroup of $G$. Since $M$ is $\tilde{F}(G)$-subnormal maximal subgroup, we see that
 $M\triangleleft M\tilde{F}(G)$. Hence $M\supseteq\tilde{F}(G)$. It means that
 $\tilde{F}(G)\leq\Delta(G)$. From $\Phi(G)=1$ and lemma 2.1 it follows that
 $\tilde{F}(G)\leq Z_\infty(G)$. Since $C_G(\tilde{F}(G))\leq\tilde{F}(G)$ by
 proposition 3.1 and $G^\mathfrak{N}\leq C_G(Z_\infty(G))$ by lemma 2.2, we see that
 $G^\mathfrak{N}\leq\tilde{F}(G)\leq Z_\infty(G)$. By lemma 2.3 $G$ is nilpotent,
 the  contradiction. \end{proof}

\begin{cor}\cite{33}. If $G$ is a non-nilpotent group then there is an abnormal
  maximal subgroup $M$ of $G$ such that $\tilde{F}(G)\nsubseteq M$.\end{cor}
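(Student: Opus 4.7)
My plan is to derive the corollary as a contrapositive application of Theorem 4.1. That is, I would assume that $\tilde{F}(G)\subseteq M$ for every abnormal maximal subgroup $M$ of $G$, and then deduce that $G$ is nilpotent; this clearly implies the stated conclusion.

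The only structural fact I would invoke beyond Theorem 4.1 is the standard dichotomy that every maximal subgroup of $G$ is either normal in $G$ or abnormal in $G$. (Indeed, for a maximal subgroup $M$, $N_G(M)$ equals $M$ or $G$ by maximality; in the former case one checks that for $g\notin M$ the join $\langle M,M^g\rangle$ strictly contains $M$, hence equals $G$, which gives abnormality.) With this in hand I would check, under the standing assumption, that every maximal subgroup of $G$ is $\tilde{F}(G)$-subnormal, and then apply Theorem 4.1.

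For a normal maximal subgroup $M$ this is immediate: $M\triangleleft G$ forces $M\triangleleft\langle M,\tilde{F}(G)\rangle$, so $M$ is (trivially) subnormal in $\langle M,\tilde{F}(G)\rangle$. For an abnormal maximal subgroup $M$, the standing hypothesis gives $\tilde{F}(G)\subseteq M$, hence $\langle M,\tilde{F}(G)\rangle=M$ and $M$ is subnormal in itself. Once both cases are disposed of, Theorem 4.1 delivers that $G$ is nilpotent, contradicting the hypothesis on $G$ and finishing the argument.

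I do not foresee a real obstacle: the substantive work is already contained in the abnormal case of the proof of Theorem 4.1, and the corollary is essentially that implication rephrased. The only step that requires a moment of care is the maximal-subgroup dichotomy, but it is standard and easily verified as sketched above.
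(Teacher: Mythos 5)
Your proposal is correct and follows essentially the same route as the paper: assume $\tilde{F}(G)\subseteq M$ for all abnormal maximal subgroups, conclude every maximal subgroup is $\tilde{F}(G)$-subnormal, and invoke Theorem 4.1 for a contradiction. The only difference is that you make explicit the normal/abnormal dichotomy for maximal subgroups, which the paper's terser proof uses implicitly; this is a fair and correct elaboration, not a different argument.
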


\begin{proof} Assume the contrary. If $\tilde{F}(G)\subseteq M$ for every abnormal
 maximal subgroup $M$ of $G$ then $M$ is  $\tilde{F}(G)$-subnormal.  Thus $G$ is
 nilpotent by theorem 4.1, a contradiction.\end{proof}

The following example shows that we can not use $F^*(G)$ in place of $\tilde{F}(G)$
 in theorem 4.1.  Let $ G \simeq A_{5}$ be the alternating group on
5 letters and $ K = \mathbb{F}_{3}$. According to \cite{20} there is faithful
 irreducible Frattini $KG$-module $A$  of  dimension 4. By known Gaschutz
 theorem \cite{41}, there
exists a Frattini extension  $A\rightarrowtail R\twoheadrightarrow G$
such that $A\stackrel {G}{\simeq} \Phi(R)$ and $R/\Phi(R)\simeq G$.
From the properties of module $A$ it follows that $\tilde {F}^\infty(G) =\tilde {F}(G) = R$
 and  $F^{*}(G)=\Phi(R)$.

\textbf{Problem 2.} Is it possible to replace $\tilde{F}(G)$ by $\tilde{F}^\infty(G)$
 in theorem 4.1.

\begin{thm}  The following statements for a group $G$ are equivalent:

(1) $ G $ is nilpotent;

(2) Every abnormal subgroup of $ G $ is $ F^{*}(G) $-subnormal
 subgroup of $ G $;

(3) All normalizers  of Sylow subgroups of $G$ are  $ F^{*} (G) $-subnormal;

(4) All cyclic primary subgroups of $G$ are  $ F^{*} (G) $-subnormal;

(5) All  Sylow subgroups of $G$ are $ F^{*} (G) $-subnormal.
\end{thm}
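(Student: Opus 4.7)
The strategy is to reduce each of the implications (2)--(5) $\Rightarrow$ (1) to a single intermediate fact: $F^*(G)\leq N_G(P)$ for every Sylow subgroup $P$ of $G$. The direction (1) $\Rightarrow$ (2)--(5) is immediate, since a nilpotent group $G$ has $F^*(G)=G$ and every subgroup of $G$ is subnormal. Also (2) $\Rightarrow$ (3) is trivial because Sylow normalizers are abnormal.

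From (5), $P$ is a Hall subgroup of $PF^*(G)$ and is subnormal there, so by the classical fact that a subnormal Hall subgroup is normal we get $P\triangleleft PF^*(G)$. From (3), $N=N_G(P)$ is abnormal in $G$; abnormality passes to any overgroup, so $N$ is abnormal in $NF^*(G)$, while (3) makes $N$ subnormal there. Along the subnormal chain $N=N_0\triangleleft N_1\triangleleft\dots\triangleleft N_k=NF^*(G)$, abnormality in $N_1$ forces any $n\in N_1$ to lie in $\langle N,N^n\rangle=N$ (since $N^n=N$), hence $N_1=N$; iterating gives $N=NF^*(G)$.

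The implication (4) $\Rightarrow$ (1) requires more work. First, for any $p$-element $x\in F^*(G)$, hypothesis (4) gives $\langle x\rangle$ subnormal in $F^*(G)$, so $\langle x\rangle\leq O_p(F^*(G))=O_p(G)$; every Sylow $p$-subgroup of $F^*(G)$ therefore equals $O_p(G)$, and $F^*(G)=F(G)$. For a general $p$-element $x\in G$, I would set $Y=O_p(\langle x\rangle F^*(G))$, which contains $\langle x\rangle$ by subnormality. Since $Y$ and $F^*(G)$ are both normal in $\langle x\rangle F^*(G)$, the commutator satisfies $[Y,F^*(G)]\leq Y\cap F^*(G)\leq O_p(F^*(G))=O_p(G)$, so $[x,F^*(G)]\leq O_p(G)$. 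Applying this to every element of a Sylow $p$-subgroup $P$ and intersecting $[P,O_q(G)]\leq O_p(G)\cap O_q(G)=1$ for $q\neq p$ gives $[P,F(G)]\leq P$, i.e. $F^*(G)=F(G)\leq N_G(P)$.

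Once $F^*(G)\leq N_G(P)$ is in hand for every Sylow $P$, $F^*(G)$ normalizes each of its own Sylows, so is nilpotent, and $F^*(G)=F(G)$. Each Sylow $p$-subgroup $P$ then centralizes $O_q(G)$ for $q\neq p$; combined with Lemma 2.4 ($C_G(F(G))\leq F(G)$), the faithful embedding $G/F(G)\hookrightarrow\prod_q\mathrm{Aut}(O_q(G))$ has $q$-group image in each factor, so $G/F(G)$ is nilpotent. Then $PF(G)/F(G)$ is the Sylow $p$-subgroup of the nilpotent group $G/F(G)$ and is normal, so $PF(G)\triangleleft G$; since $PF(G)=P\times\prod_{q\neq p}O_q(G)$ is a direct product, $P$ is characteristic in $PF(G)$ and hence normal in $G$. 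All Sylow subgroups of $G$ being normal, $G$ is nilpotent. The main obstacle I expect is the commutator bound $[x,F^*(G)]\leq O_p(G)$ in the (4) step: the subnormality of $\langle x\rangle$ is given only inside the possibly small overgroup $\langle x\rangle F^*(G)$, and has to be leveraged via the double normality of $Y$ and $F^*(G)$ inside that group rather than through any direct control of the $F^*(G)$-action on $x$.
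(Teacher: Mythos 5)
Your proof is correct, but it takes a genuinely different route from the paper's. The paper establishes the cycle (1)$\Rightarrow$(2)$\Rightarrow$(3)$\Rightarrow$(4)$\Rightarrow$(5)$\Rightarrow$(1): it deduces (3)$\Rightarrow$(4) from the chain $H\triangleleft\triangleleft P\triangleleft N_G(P)\triangleleft\triangleleft N_G(P)F^*(G)$, deduces (4)$\Rightarrow$(5) from a theorem on subnormal subgroups of products (theorem 1.1.7 of Ballester-Bolinches, Esteban-Romero, Asaad) together with the fact that a join of subnormal subgroups is subnormal, and finishes (5)$\Rightarrow$(1) via pronormal-plus-subnormal-implies-normal and then Baer's theorem that the intersection of all Sylow normalizers lies in $Z_\infty(G)$, combined with Lemmas 2.2--2.4. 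You instead run a hub-and-spoke argument: each of (3), (4), (5) is shown directly to force $F^*(G)\leq N_G(P)$ for every Sylow subgroup $P$ --- exactly the criterion of Corollary 4.4, which the paper obtains as a consequence rather than uses --- with (3) handled by abnormal-plus-subnormal-implies-equal, (5) by subnormal-Sylow-implies-normal (equivalent to the paper's pronormality argument), and (4) by your $O_p$-commutator computation $[Y,F^*(G)]\leq Y\cap F^*(G)\leq O_p(F^*(G))=O_p(G)$, which is a genuinely different and more elementary treatment than the paper's passage through products of subnormal subgroups. Your endgame likewise replaces Baer's hypercenter theorem and Lemmas 2.2--2.3 by a self-contained computation ($F^*(G)=F(G)$, each Sylow $p$-subgroup centralizes $O_q(G)$ for $q\neq p$, $G/F(G)$ nilpotent from the action on $F(G)$, hence all Sylow subgroups normal). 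Two small imprecisions worth fixing, neither a gap: the faithful action is that of $G/C_G(F(G))$, not of $G/F(G)$, on $\prod_q O_q(G)$ (then $G/F(G)$ is nilpotent as a quotient, using $C_G(F(G))\leq F(G)$); and the image of $G$ in $\mathrm{Aut}(O_q(G))$ is a $q$-group because the Sylow $r$-subgroups with $r\neq q$ lie in the kernel and generate a normal subgroup of $q$-power index --- ``generated by $q$-subgroups'' alone would not suffice. What the paper's route buys is the intermediate implications (3)$\Rightarrow$(4)$\Rightarrow$(5) themselves as byproducts; what yours buys is independence from the heavier subnormality machinery and from Baer's theorem.
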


  \begin{proof}  (1)$\Rightarrow$(2). Let $G$ be a nilpotent group. It is clear that every
  subgroup of $G$ is $F^*(G)$-subnormal. Hence (1) implies (2).

 (2)$\Rightarrow$(3). Normalizers of  all Sylow subgroups are abnormal. Therefore
(2) implies (3).

 (3)$\Rightarrow$(4). Every cyclic primary subgroup $H$ of $G$ is contained in some Sylow
 subgroup $P$ of $G$. So
 $H\triangleleft\triangleleft P\triangleleft N_G(P)\triangleleft \triangleleft N_G(P)F^*(G)$.
 Hence $H\triangleleft\triangleleft N_G(P)F^*(G)$. In particular, $H$ is subnormal in $HF^*(G)$.
 Hence (3) implies (4).

 (4)$\Rightarrow$(5). Let $ P$ be a Sylow subgroup of $G$ and  $ x \in P $. Then the subgroup
 $ \langle x \rangle $ is the $ F^{*} (G) $-subnormal subgroup. So
 $ \langle x \rangle \triangleleft \triangleleft \langle x \rangle F^{*} (G) $. Note
that $ \langle x \rangle \triangleleft \triangleleft P $. Since
$ \langle x \rangle \leq P \cap \langle x \rangle F^{*} (G) $, by
theorem 1.1.7  (\cite{6} p.3) $ \langle x \rangle $ is the subnormal subgroup in the product
 $ P (\langle x \rangle F^{*} (G))=PF^*(G) $.
 Since $ P $ is generated by its cyclic subnormal in $ P F^{*} (G) $ subgroups, we see
 that $ P \triangleleft \triangleleft PF^{*} (G) $. So (4) implies (5).

(5)$\Rightarrow$(1).
Let $P$ be an arbitrary Sylow subgroup of $G$.  Since $P$ is pronormal subnormal subgroup of
  $PF^*(G)$, by lemma 6.3 p.241 of \cite{HD} we see that $F^*(G)\leq N_G(P)$. Now $F^*(G)$ lies
 in the intersection of all normalizers of Sylow subgroups of $G$. By result of \cite{15},
 $ F^{*} (G) \subseteq Z_{\infty} (G)$.  By lemmas  2.2 and 2.4 we  see that
 $G^\mathfrak{N}\leq Z_\infty(G)$. Thus $G$ is nilpotent by lemma 2.3. \end{proof}

\begin{cor} A group $ G $ is nilpotent if and only if
 the normalizers of all Sylow subgroups of $ G $ contains
$ F^{*}(G) $.\end{cor}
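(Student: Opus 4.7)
The plan is to derive this corollary directly from Theorem 4.3, which already establishes the equivalence of nilpotency with the condition that normalizers of Sylow subgroups are $F^*(G)$-subnormal. The key observation is simply that containing $F^*(G)$ is a strictly stronger condition than being $F^*(G)$-subnormal, so the corollary amounts to a trivial specialization in one direction together with triviality of the other direction.

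For the forward implication, assume $G$ is nilpotent. Then every Sylow subgroup $P$ of $G$ is normal, so $N_G(P)=G$, which clearly contains $F^*(G)$. This direction requires no further work.

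For the reverse direction, suppose $F^*(G)\subseteq N_G(P)$ for every Sylow subgroup $P$ of $G$. Then $\langle N_G(P), F^*(G)\rangle = N_G(P)$, so $N_G(P)$ is trivially subnormal in $\langle N_G(P), F^*(G)\rangle$. By definition this means $N_G(P)$ is $F^*(G)$-subnormal in $G$. Since this holds for every Sylow subgroup $P$, condition (3) of Theorem 4.3 is satisfied, and by the equivalence (1)$\Leftrightarrow$(3) of that theorem, $G$ is nilpotent.

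There is no substantive obstacle here: the entire content of the corollary is packaged in Theorem 4.3, and the only step is to note that the hypothesis of the corollary instantly yields hypothesis (3) of the theorem. The only slight care is in writing out that $N_G(P)$ being subnormal in $\langle N_G(P), F^*(G)\rangle = N_G(P)$ is a vacuous condition, and therefore the apparently weaker-looking hypothesis of Theorem 4.3(3) is automatically implied by the apparently stronger-looking containment hypothesis of the corollary.
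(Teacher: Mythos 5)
Your proof is correct and follows essentially the same route as the paper: both directions reduce to Theorem 4.3, with the forward implication noting $N_G(P)=G$ in a nilpotent group and the reverse noting that $F^*(G)\subseteq N_G(P)$ makes $N_G(P)$ trivially $F^*(G)$-subnormal, so condition (3) of the theorem applies. Your write-up is in fact slightly more explicit than the paper's about why the containment yields subnormality in $\langle N_G(P), F^*(G)\rangle = N_G(P)$.
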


\begin{proof} If $G$ is nilpotent then for every Sylow subgroup $P$ of $G$ we have $F^*(G)=N_G(P)=G$.
 Thus the normalizers of all Sylow subgroups of $ G $ contains
$ F^{*}(G) $. If   the normalizers of all Sylow subgroups of $ G $ contains
$ F^{*}(G) $ then they are $ F^{*}(G) $-subnormal. Thus $G$ is nilpotent. \end{proof}

\begin{thm} If a group $G$ is a product of two $F(G)$-subnormal nilpotent subgroups $A$ and $B$
 then $G$ is
 nilpotent.\end{thm}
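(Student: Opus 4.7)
The argument is a minimal counterexample argument. Let $G$ be a counterexample of least order. Since $A$ is subnormal in $\langle A,F(G)\rangle=AF(G)$ with $F(G)\trianglelefteq AF(G)$ nilpotent and $A$ itself nilpotent, Wielandt's theorem on joins of subnormal nilpotent subgroups yields that $AF(G)$ is nilpotent; likewise $BF(G)$ is nilpotent. Hence $G=AB=(AF(G))(BF(G))$ is a product of two nilpotent subgroups, and by the Kegel--Wielandt theorem $G$ is soluble; in particular $F(G)\neq 1$ and $C_G(F(G))\le F(G)$.

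I would then make a Frattini reduction. The quotient $G/\Phi(G)$ inherits the hypotheses: $F(G/\Phi(G))=F(G)/\Phi(G)$, both $A\Phi(G)/\Phi(G)$ and $B\Phi(G)/\Phi(G)$ are nilpotent, and $A\Phi(G)/\Phi(G)$ is subnormal in $AF(G)/\Phi(G)=(A\Phi(G)/\Phi(G))F(G/\Phi(G))$, hence $F(G/\Phi(G))$-subnormal (and analogously for $B$). If $\Phi(G)\neq 1$, minimality of $G$ forces $G/\Phi(G)$ nilpotent and then $G$ itself nilpotent by Theorem~9.3(b) of \cite{HD}, contradicting the choice of $G$. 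Hence $\Phi(G)=1$, which in the soluble case forces $F(G)=Soc(G)=\prod_p O_p(G)$ to be abelian with $C_G(F(G))=F(G)$.

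The crux, and the main obstacle, is to show in this Frattini-free setting that $A\le F(G)$; by symmetry one then obtains $B\le F(G)$, so $G=AB\le F(G)$ is nilpotent, the desired contradiction. Using the Sylow decomposition $A=\prod_p A_p$, nilpotency of $AF(G)$ forces $A_q$ to centralise $O_p(G)$ for every $q\neq p$ and $A_p$ to act on the abelian $p$-group $O_p(G)$ by nilpotent automorphisms, i.e.\ as a $p$-group; analogous statements hold for $B$. Consequently $G/C_G(O_p(G))$ is generated by two $p$-subgroups (the images of $A_p$ and $B_p$), and the remaining task is to force this quotient to be trivial for every prime $p$. I would attempt this by restricting to a fixed minimal normal (Gasch\"utz-complemented) direct summand $N$ of $O_p(G)$ and arguing that no nontrivial faithful irreducible action of $G/C_G(N)$ generated by two $p$-subgroups can be compatible with the product decomposition $G=AB$ together with the $F(G)$-subnormality of both factors.
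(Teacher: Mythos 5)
Your first three reductions are sound: Wielandt's join theorem does give that $T=AF(G)$ and $R=BF(G)$ are nilpotent, Kegel--Wielandt gives solubility, the Frattini reduction is legitimate (indeed, since $AF(G)$ is nilpotent the subnormality hypothesis in $G/\Phi(G)$ is automatic), and with $\Phi(G)=1$ one does get $F(G)=Soc(G)$ abelian with $C_G(F(G))=F(G)$. The genuine gap is at the step you yourself flag as the crux: you never prove that $G/C_G(N)=1$, you only announce that you ``would attempt'' to rule out a faithful irreducible action. Worse, the lemma you propose to prove is false as stated: a group \emph{generated} by two $p$-subgroups can perfectly well act faithfully and irreducibly on an $\mathbb{F}_p$-module --- $SL_2(p)$ is generated by two Sylow $p$-subgroups and acts faithfully and irreducibly on $\mathbb{F}_p^2$ (for $p=2$, $GL_2(2)\simeq S_3$ is generated by two involutions). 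So the route via ``generated by two $p$-subgroups'' cannot be closed. What you failed to exploit is that you have much more than generation: since $G=AB$ and, as you showed, the images of $A$ and $B$ in $G/C_G(N)$ coincide with the images of $A_p$ and $B_p$, the quotient $G/C_G(N)$ is the \emph{product} of two $p$-subgroups, hence has $p$-power order by $|XY|=|X||Y|/|X\cap Y|$, hence is a $p$-group. A nontrivial $p$-group acting on the nontrivial elementary abelian $p$-group $N$ has nontrivial fixed points, which form an invariant subgroup, so irreducibility forces the action to be trivial and faithfulness forces $G=C_G(N)$; running over all minimal normal subgroups gives $G=C_G(F(G))=F(G)$, the desired contradiction. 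With that one observation your argument becomes a complete proof; without it, it is not one.

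For comparison, the paper's proof is shorter and avoids both the minimal counterexample and the Frattini-free analysis. After the same first step ($T=AF(G)$ and $R=BF(G)$ nilpotent), it notes that the Sylow $p$-subgroups $T_p\triangleleft T$ and $R_p\triangleleft R$ are normalized by $F(G)$, that suitably chosen $T_pR_p=G_p$ is a Sylow $p$-subgroup of the product $G=TR$, and that normality of $F(G)$ in $G$ then puts $F(G)=F^*(G)$ inside the normalizer of \emph{every} Sylow subgroup of $G$; its Corollary 4.4 (resting on Baer's theorem that such an $F^*(G)$ lies in $Z_\infty(G)$) finishes the proof. So the paper trades your representation-theoretic endgame for the Kegel--Wielandt factorization of Sylow subgroups of a product plus the machinery of Theorem 4.3, while your (repaired) argument is more self-contained at the cost of the $\Phi(G)=1$ reduction and the module argument.
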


\begin{proof} Since $G$ is a product of two nilpotent subgroups, we see that $G$ is soluble. Hence
 $F(G)=F^*(G)$.   Let $T=AF(G)$. Note that $T$ is a product of two subnormal nilpotent subgroups.
 Hence,
 $T$ is nilpotent. By analogy $R=BF(G)$ is nilpotent and $G=TR$. Let $p$ be a prime. If $T_p$ is
 a Sylow
 $p$-subgroup of $T$ then $T_p\triangleleft T$, in particular $F(G)\leq N_G(T_p)$. By analogy if
 $R_p$ is
 a Sylow $p$-subgroup of $R$ then $F(G)\leq N_G(R_p)$. It is well known that $R_pT_p=G_p$ is a Sylow
 $p$-subgroup of $G=RT$. Now $F(G)\leq N_G(G_p)$. From $F(G)\triangleleft G$ it follows that  $F(G)$
 lies in all normalizers of Sylow $p$-subgroups of $G$. So we see that $F(G)$ lies  in all normalizers
 of
 Sylow subgroups of $G$. Thus by corollary 4.4 $G$ is nilpotent.\end{proof}

\begin{thm} Let a group $G=\langle A,B\rangle$ where $A$ and $B$  are $F(G)$-subnormal supersoluble
 subgroups of $G$. If $[A,B]$ is nilpotent, then $G$ is supersoluble.\end{thm}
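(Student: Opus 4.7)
The plan is to reduce the statement to Lemma 2.5 by exhibiting $G$ as the product of two \emph{normal} supersoluble subgroups whose derived subgroup is nilpotent. The key preliminary observation is that $[A,B]$ is normal in $G$ by Lemma 2.7(1) and nilpotent by hypothesis, hence $[A,B]\le F(G)$, since $F(G)$ is the largest normal nilpotent subgroup.

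Setting $T=AF(G)$ and $R=BF(G)$, I would next show $T,R\triangleleft G$. Clearly $A$ normalizes $T$, and the standard commutator expansion gives
\[
[B,AF(G)]\le [B,F(G)]\cdot[B,A]^{F(G)}\le F(G),
\]
because $F(G)\triangleleft G$ and $[A,B]\le F(G)$; hence $B$ normalizes $T$, and so does $G=\langle A,B\rangle$. The argument for $R$ is symmetric. Moreover, $T$ is the product of the supersoluble subnormal subgroup $A$ (subnormal in $T$ by the $F(G)$-subnormality hypothesis) and the normal nilpotent subgroup $F(G)$, so $T$ is supersoluble by Lemma 2.6; similarly $R$ is supersoluble.

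Since $T,R\triangleleft G$ and $G=\langle A,B\rangle\le\langle T,R\rangle$, we obtain $G=TR$. To apply Lemma 2.5 it remains to check that $G'$ is nilpotent. By Lemma 2.7(4), $G'=T'R'[T,R]$; here $T'$ is nilpotent (as $T$ is supersoluble) and characteristic in $T\triangleleft G$, hence $T'\triangleleft G$, and similarly for $R'$. A commutator computation analogous to the one above yields $[T,R]=[AF(G),BF(G)]\le F(G)$, so $[T,R]$ is normal and nilpotent in $G$. Fitting's theorem then shows that the product $T'R'[T,R]$ of three normal nilpotent subgroups of $G$ is nilpotent, so $G'$ is nilpotent and Lemma 2.5 gives the supersolubility of $G$.

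The main obstacle is pinpointing the correct decomposition: the hypothesis only asserts that $A$ is subnormal in $AF(G)$, not that $AF(G)$ is normal or even subnormal in $G$, and it is precisely the nilpotency of $[A,B]$ that upgrades $AF(G)$ to a \emph{normal} subgroup of $G$ and thereby enables Lemma 2.5 to apply.
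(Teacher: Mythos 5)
Your proof is correct, and it takes a genuinely different route from the paper's at two points, even though both arguments end with the same application of Lemma 2.5. The paper factorizes $G$ as $(A[A,B])(B[A,B])$, where normality of the factors comes for free from Lemma 2.7(2); you instead take the larger factors $T=AF(G)$ and $R=BF(G)$ and must prove normality yourself, which your commutator computation does correctly since $[A,B]\le F(G)$ and $F(G)\triangleleft G$ force $[T,B]\le F(G)\le T$ (strictly, your displayed bound should read $[B,AF(G)]\le\langle [B,F(G)],[B,A]^{F(G)}\rangle$, but both pieces lie in the subgroup $F(G)$, so the conclusion is unaffected). The more substantial difference is the nilpotency of $G'$: the paper deduces it from Theorem 4.5 (a product of two $F(G)$-subnormal nilpotent subgroups is nilpotent), applied inside $G'$ to the factors $A'F(G')$ and $B'F(G')$, whereas you bypass Theorem 4.5 entirely, writing $G'=T'R'[T,R]$ via Lemma 2.7(4) and noting that all three factors are normal nilpotent subgroups of $G$; indeed $T'$ and $R'$ are characteristic in the normal supersoluble subgroups $T$ and $R$, hence normal nilpotent in $G$ and contained in $F(G)$, and $[T,R]\le F(G)$ by the same commutator expansion, so you actually obtain the stronger fact $G'\le F(G)$. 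What each approach buys: the paper's smaller factors $A[A,B]$, $B[A,B]$ make the normality checks trivial but require the auxiliary Theorem 4.5 (whose proof runs through Sylow normalizers); your choice of $AF(G)$, $BF(G)$ costs one routine normality computation but makes the whole argument self-contained within Lemmas 2.5--2.7 and Fitting's theorem.
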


\begin{proof}   By (1) of lemma 2.7 $[A,B]\triangleleft G$. By (4) of lemma 2.7 $[A,B]\leq F(G')$.
  First we will show that $G'$ is nilpotent. Since $A$ is supersoluble, $A'$ is nilpotent normal
 subgroup of
 $A$. From $A\triangleleft\triangleleft AF(G)$ we obtain that $A'\triangleleft\triangleleft AF(G)$.
 In particular $A'\triangleleft\triangleleft A'F(G)$. Note that $F(G')\leq F(G)$. Thus $A'$ is
 $F(G')$-subnormal nilpotent subgroup of $G'$. Note that $A'F(G')$ is $F(G')$-subnormal nilpotent
   subgroup of $G'$ and $[A,B]\leq A'F(G')$. By analogy $B'F(G')$ is $F(G')$-subnormal nilpotent
  subgroup of $G'$ and $[A,B]\leq B'F(G')$. Thus $G'$ is a product of two nilpotent $F(G')$-subnormal
 subgroups by (4) of lemma 2.7 and hence by theorem 4.5 is nilpotent.

Now we are going to show that $G$ is supersoluble. Note that $[A,B]\leq F(G)$. So $A$ is
 $[A,B]$-subnormal.
 Hence $A[A,B]$ is a product of supersoluble subnormal subgroup $A$ and nilpotent normal
 subgroup $[A,B]$.
 Thus $A[A,B]$ is normal supersoluble subgroup of $G$ by lemmas 2.6 and 2.7. By analogy $B[A,B]$
 is normal
 supersoluble subgroup of $G$. So $G$ is a product of two normal supersoluble subgroups and $G'$
 is nilpotent,
 i.e. $G$ is supersoluble by lemma 2.5. \end{proof}

The well known result states that a group $G$ is supersoluble  if it contains two normal supersoluble
 subgroups
 with coprime indexes in $G$. As the following example shows this result fails if we replace
‘‘normal’’ by ‘‘$F(G)$-subnormal’’.  Let $G$ be the symmetric group on 3 letters. By theorem 10.3B
 there is
 a faithful irreducible $\mathbb{F}_{7}G$-module $V$ and the dimension of $V$ is 2. Let $R$ be the
 semidirect
 product of $ V$ and $G$. Let $A=VG_{3}$ and $B=VG_{2}$ where $G_{p}$ is a Sylow $p$-subgroup of $
G$, $p\in{2,3}$. Since $7\equiv 1 ($mod$~p)$ for $p\in{2,3}$, it is easy to check that subgroups $A$
 and $B$ are supersoluble. Since $V$ is faithful irreducible module, $F(R)=V$. Therefore $A$ and $B$
 are the $F(R)$-conjugate-permutable subgroups of $G$. Note that $R=AB$ but $R$  is not supersoluble.

Let $p$ be a prime. Recall that a group $G$ is called $p$-decomposable if it is the direct product
 of its
 Sylow $p$-subgroup on Hall $p'$-subgroup.

\begin{lem} Let $p$ be a prime. The $p$-decomposable residual of a group $G$ is generated by all
 commutators $[a,b]$, where $a$ is a $p$-element and $b$ is a $p'$-element of prime power
 order.\end{lem}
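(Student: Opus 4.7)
The plan is to let $N$ denote the normal subgroup of $G$ generated by all commutators $[a,b]$ with $a$ a $p$-element and $b$ a $p'$-element of prime power order, and to let $D = G^{\mathfrak{D}_p}$ denote the $p$-decomposable residual. The class of $p$-decomposable groups is a formation, so $D$ exists and $G/D$ is the largest $p$-decomposable quotient of $G$. I want to prove $N = D$.

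The inclusion $N \subseteq D$ is immediate: in the $p$-decomposable quotient $G/D$ every $p$-element commutes with every $p'$-element, so each generating commutator of $N$ already lies in $D$.

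The content of the lemma is therefore the reverse inclusion, which I plan to obtain by showing $G/N$ is $p$-decomposable. Set $H = G/N$; it suffices to prove that every $p$-element of $H$ centralizes every $p'$-element of $H$, for then $H = O_p(H) \times O_{p'}(H)$ is $p$-decomposable and hence $D \subseteq N$. Two lifting observations are needed. First, every $p$-element of $H$ is the image of a $p$-element of $G$: if $\bar g \in H$ has $p$-power order, decompose its lift in $G$ as $g = g_p g_{p'}$ into commuting $p$- and $p'$-parts; by uniqueness of the $p$-/$p'$-decomposition in $H$ the $p'$-part $\overline{g_{p'}}$ must be trivial, so $\bar g = \overline{g_p}$. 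Second, every $p'$-element $y$ of $H$ can be written as a product $y = y_1 \cdots y_s$ of pairwise commuting prime-power $p'$-elements of $H$, and by the same uniqueness argument each $y_i$ lifts to an element $b_i \in G$ of $q_i^{k_i}$-order for some prime $q_i \neq p$, i.e.\ a $p'$-element of prime power order.

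Given a $p$-element $x \in H$ lifted to a $p$-element $a \in G$, and a $p'$-element $y = y_1 \cdots y_s$ with lifts $b_i$ as above, each $[a,b_i]$ lies in $N$ by definition, so $[x,y_i]=1$ in $H$. Since the $y_i$ commute pairwise in $H$, the standard expansion $[x,y_1 \cdots y_s]$ as a product of conjugates of the $[x,y_i]$ forces $[x,y]=1$. This completes the argument. The one step that requires genuine care, rather than routine commutator manipulation, is the lifting via the unique $p$-/$p'$-decomposition of an element in the quotient $H$; once that is in hand, the rest is formal.
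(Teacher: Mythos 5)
Your proof is correct and follows essentially the same route as the paper: both define $N$ as the (normal) subgroup generated by the commutators $[a,b]$, show $G/N$ is $p$-decomposable to get one inclusion, and read off the other inclusion from the commutativity of $p$- and $p'$-elements in the $p$-decomposable quotient $G/D$. The only difference is expository: you spell out the lifting of $p$-elements and prime-power $p'$-elements from the quotient via the unique $p$-/$p'$-decomposition, a step the paper's proof uses implicitly when it passes from commutators of elements of $G$ to commutativity in $G/N$.
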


\begin{proof} Let $N$ be a subgroup of $G$  generated by all commutators $[a,b]$, where $a$ is a
 $p$-element and $b$ is a $p'$-element of prime power order. It is clear that $N\triangleleft G$.
 Let $x$ be a  $p$-element and $y$ be a $p'$-element of prime power order in $G$. Then
 $xNyN=xyN=yx[x,y]N=yxN$.
 Hence in $G/N$ every $p$-element commutes with every $p'$-element of prime power order. But every
 $p'$-element of $G/N$ is a product of $p'$-elements of prime power order. Thus every  $p$-element
 commutes with every $p'$-element in $G/N$. So $G/N$ is $p$-decomposable.

Assume now that $N$ is $p$-decomposable residual of $G$,       $a$ is a $p$-element and $b$ is
 a $p'$-element of prime power order of $G$. Then $aNbN=bNaN$ or $N=[a,b]N$. Hence $[a,b]\in N$.
 This completes the proof.\end{proof}

\begin{thm}Let $A$, $B$ and $C$ be a $F(G)$-subnormal supersoluble subgroups of a group $G$.
 If indexes
  of $A$, $B$ and $C$ in $G$ are pairwise coprime then $G$ is supersoluble.\end{thm}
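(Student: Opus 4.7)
I would argue by minimal counterexample; let $G$ be a counterexample of minimal order.

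\textbf{Solubility and standard reductions.} The subgroups $AF(G)$, $BF(G)$, $CF(G)$ are soluble (each has $F(G)$ normal and nilpotent with supersoluble quotient) and have pairwise coprime indexes, so a classical theorem of Wielandt on three soluble subgroups of pairwise coprime indexes gives that $G$ is soluble; in particular $F(G)=F^{*}(G)$. If $\Phi(G)\neq 1$, then in $G/\Phi(G)$ the images of $A,B,C$ remain supersoluble, remain $F(G/\Phi(G))$-subnormal (using $F(G/\Phi(G))=F(G)/\Phi(G)$ in the soluble case, and the fact that $F(G)$-subnormality passes to the quotient), and retain pairwise coprime indexes, so by minimality $G/\Phi(G)$ would be supersoluble and hence $G$ supersoluble by saturation, a contradiction. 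Hence $\Phi(G)=1$. Two distinct minimal normal subgroups $N_1\neq N_2$ of $G$ would similarly yield supersoluble quotients $G/N_i$ and embed $G$ into the supersoluble group $G/N_1\times G/N_2$, so $G$ has a unique minimal normal subgroup $N$. Standard theory for soluble groups with $\Phi(G)=1$ and a unique minimal normal subgroup then gives $N=F(G)=C_G(N)$, an elementary abelian $p$-group for some prime $p$.

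\textbf{Setting up the final step.} Since $|G:A|$, $|G:B|$, $|G:C|$ are pairwise coprime, at most one of them is divisible by $p$, so at least two -- say $A$ and $B$ -- contain a Sylow $p$-subgroup of $G$ and hence contain $N$; pairwise coprimality also gives $G=AB=AC=BC$. By Lemma 2.6 the subgroup $CN$ is supersoluble, being the product of the subnormal supersoluble $C\triangleleft\triangleleft CN$ and the abelian normal $N$. With $G=\langle A,B\rangle$ in hand, the plan is to invoke Theorem 4.6 on the pair $(A,B)$ by showing that $[A,B]$ is nilpotent. Using Lemma 2.7(4) one has $G'=A'B'[A,B]$, and $A'$, $B'$ are nilpotent and $F(G)$-subnormal as characteristic subgroups of the $F(G)$-subnormal $A$, $B$; then Theorem 4.5 reduces the nilpotency of $G'$, and hence the supersolubility of $G$, to establishing $[A,B]\leq F(G)=N$.

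\textbf{Main obstacle.} The crux is precisely to force $[A,B]\leq N$, and this is where the third subgroup $C$ must intervene; without it the statement fails, as the example preceding the theorem shows. The natural strategy is to prove $N$ cyclic of order $p$: supersolubility of $A$ and $B$ produces one-dimensional $A$-invariant and $B$-invariant $\mathbb{F}_p$-subspaces $N_A$, $N_B$ of $N$, whose $G$-stabilizers have indexes in $G$ dividing $|G:A|$ and $|G:B|$ respectively and are therefore coprime; combining this with the analogous constraint from the supersoluble overgroup $CN$, whose index divides $|G:C|$ and so is coprime to both $|G:A|$ and $|G:B|$, should pin down a $G$-invariant one-dimensional subspace of the $G$-irreducible module $N$, forcing $|N|=p$. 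Together with $G/N$ supersoluble (from a further invocation of minimality with the inherited factorisations) this yields $G$ supersoluble, the desired contradiction.
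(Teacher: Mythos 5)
Your reductions up to the final step are mostly sound (solubility via Wielandt, the $\Phi(G)$-quotient using $F(G/\Phi(G))=F(G)/\Phi(G)$, the observation that two of $A,B,C$ contain $N=F(G)$ and that $CN$ is supersoluble by Lemma 2.6), and the idea of funnelling everything into Theorem 4.6 via $[A,B]\leq F(G)$ is attractive. But the proposal fails exactly where you yourself locate the ``main obstacle'': nothing you write forces $[A,B]\leq N$, i.e.\ $|N|=p$. Your sketch produces three lines $N_A,N_B,N_C$ of $N$ whose $G$-stabilizers contain $A$, $B$, $CN$ respectively and hence have pairwise coprime indexes --- but these are stabilizers of three \emph{unrelated} points, and three $G$-orbits on the projective space of $N$ of pairwise coprime lengths (say $2$, $3$, $35$) coexist with no fixed point whatsoever. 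Coprimality would only bite if the \emph{same} line were stabilized by two of the subgroups (then the index of its stabilizer divides a gcd equal to $1$, giving a $G$-invariant line and $|N|=p$ by irreducibility), and nothing in the set-up provides a common line. So the decisive step is a heuristic, not an argument, and the theorem's actual content is untouched.

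There is a second, structural flaw: $F(G)$-subnormality is not inherited by quotients $G/N_i$ over a minimal normal subgroup, because $F(G/N_i)$ can properly contain $F(G)N_i/N_i$ when $N_i\not\leq\Phi(G)$ (e.g.\ $G=S_3\times S_3$, $N_1=C_3\times 1$). Hence your elimination of a second minimal normal subgroup, and the closing ``further invocation of minimality'' for $G/N$, are not legitimate uses of the inductive hypothesis; only the Frattini reduction is valid. (The $G/N$ step would be repairable if $|N|=p$ were already known, since then $G/N=G/C_G(N)$ is cyclic; the two-minimal-normal-subgroups step is not.) It is telling that the paper's proof avoids quotient induction altogether: it first deduces the Sylow tower property of $G$ from the three coprime-index subgroups, then climbs the tower, using Lemma 4.7 on $p$-decomposable residuals to show that the relevant residuals are nilpotent subnormal, hence lie in $F(G)$, so that $T=G_{p_{i+1}}F(G)\trianglelefteq G$, and it builds supersoluble subgroups $A_{i+1},B_{i+1},C_{i+1}$ containing $G_{p_1}\cdots G_{p_{i+1}}F(G)$, the new $C_{i+1}=C_iT$ being supersoluble by the classical theorem on two \emph{normal} supersoluble subgroups of coprime indexes. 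Some such global mechanism replacing your orbit heuristic is what the proposal is missing.
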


\begin{proof} Let $|G|=p_1^{a_1}p_2^{a_2}\dots p_n^{a_n}$ where $p_i$ is a prime
 and $p_1>p_2>\dots >p_n$.
 Denote through $G_{p}$ Sylow $p$-subgroup of $G$ where $p\in\{p_1,\dots,p_n\}$.

Since $A$, $B$ and $C$ are supersoluble, they satisfy the Sylow tower property (see \cite{22}, p.5).
 So
  $G$ contains three subgroups with coprime indexes that satisfies the Sylow tower property.
 By theorem
 4.13 (\cite{13} p.47) $G$ satisfies the Sylow tower property. By lemma 2.6 subgroups
 $A_1=AF(G)$, $B_1=BF(G)$
 and $C_1=CF(G)$   are supersoluble. Note that $G_{p_1}$ is a normal subgroup of $G$.
 So it is contained
 in $A_1$, $B_1$ and $C_1$.

 Assume that  supersoluble subgroups $A_i$, $B_i$, $C_i$ with coprime indexes in $G$ contain
 $G_{p_1}G_{p_2}\dots G_{p_i}$ and $F(G)$. Let us show that there are supersoluble subgroups
 $A_{i+1}$, $B_{i+1}$, $C_{i+1}$ with coprime indexes in $G$ that contain
 $G_{p_1}G_{p_2}\dots G_{p_{i+1}}$
 and $F(G)$.

By coprime indexes hypothesis we see that at least  two subgroups of    $A_i$, $B_i$, $C_i$
 contain
 $G_{p_1}G_{p_2}\dots G_{p_{i+1}}$. Without lose of generality let
 $G_{p_1}G_{p_2}\dots G_{p_{i+1}}\leq A_i$
 and $G_{p_1}G_{p_2}\dots G_{p_{i+1}}\leq B_i$.

 Note that if $h$ is a prime power order element of $G$ then $h$ is contained
 in $A_i^x$ or $B_i^x$ for
 some $x\in G$. Since $G_{p_1}G_{p_2}\dots G_{p_{i+1}}\triangleleft G$ all
 $p_{i+1}$-elements of $G$ are
 contained in $A_i^x$ and $B_i^x$ for all $x\in G$. It means that $[a,b]$ where $a$
 is a $p_{i+1}$-element
 and $b$ is a $p_{i+1}'$-element of prime power order belong to the $p_{i+1}$-decomposable
 residual of $A_i^x$
 or $B_i^x$ for some $x\in G$ by lemma 4.7. Therefore $p_{i+1}$-decomposable residual of $G$
 is generated by
 all  $p_{i+1}$-decomposable residuals of  $A_i^x$ and $B_i^x$ for all $x\in G$ by lemma 4.7.
 Note that $A_i^x$
 or $B_i^x$ are supersoluble and, hence, metanilpotent for all $x\in G$. It is well known that
 $p$-decomposable
 residual is a part of nilpotent residual. Hence  $p_{i+1}$-decomposable residuals of
  $A_i^x$ and $B_i^x$
 are nilpotent for all $x\in G$. Note that $[a,b]=a^{-1}a^b$. So $p_{i+1}$-decomposable
 residuals of  $A_i^x$
 and $B_i^x$ lie in $G_{p_1}G_{p_2}\dots G_{p_{i+1}}\triangleleft G$ for all $x\in G$.
 Thus they all
 are nilpotent subnormal subgroups of $G$. So $p_{i+1}$-decomposable residual of $G$
 lies in $F(G)$.

Let $T=G_{p_{i+1}}F(G)$. Since $p_{i+1}$-decomposable residual of $G$ lies in $F(G)$,
 we see that
 $T\triangleleft G$. Note that  $T\leq A_i$ and hence supersoluble. Let $C_{i+1}=C_iT$.
 We see that indexes
 of $C_{i+1}$, $A_{i}$ and $B_i$ are pairwise coprime in $G$. Since $C_i$ is supersoluble,
 there is Hall
 $p_{i+1}'$-subgroup $H$ of $C_i$. Let $R=HF(G)$. From $R\leq C_i$ we see that $R$ is supersoluble.
 Note
 that indexes of $T$ and $R$ are coprime in $C_{i+1}$.   So $C_{i+1}=RT$. Note that $C_{i+1}/F(G)$
 is
 $p_{i+1}$-decomposable, $T/F(G)$ is a Sylow $p_{i+1}$-subgroup of $C_{i+1}/F(G)$ and $R/F(G)$
 is a Hall
 $p_{i+1}'$-subgroup of $C_{i+1}/F(G)$. It is clear that $T/F(G)\triangleleft C_{i+1}/F(G)$ and
 $R/F(G)\triangleleft C_{i+1}/F(G)$. Thus  $T\triangleleft C_{i+1}$ and $R\triangleleft C_{i+1}$.
 So
 $C_{i+1}$ contains two normal supersoluble subgroups with coprime indexes and hence $C_{i+1}$
 is itself
 supersoluble by theorem 3.4  (\cite{22} p.127).

Now supersoluble subgroups $A_{i+1}=A_i$, $B_{i+1}=B_i$, $C_{i+1}$ have coprime indexes in $G$,
 contain
 $G_{p_1}G_{p_2}\dots G_{p_{i+1}}$ and $F(G)$. Since $G=G_{p_1}G_{p_2}\dots G_{p_{n}}$,
 we see that $G$ is
 itself supersoluble.\end{proof}

\begin{cor}Let $A$, $B$ and $C$ be a  supersoluble subgroups of a group $G$. If indexes
  of $A$, $B$ and
 $C$ in $G$ are pairwise coprime and $F(G)\leq A\cap B\cap C$ then $G$ is supersoluble.\end{cor}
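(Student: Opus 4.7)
The plan is to derive Corollary 4.9 as an immediate consequence of Theorem 4.8. The only thing that needs to be checked is that the hypothesis $F(G)\leq A\cap B\cap C$ forces $A$, $B$, and $C$ to be $F(G)$-subnormal in the sense of the paper, after which Theorem 4.8 applies verbatim.

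First I would observe that for any subgroup $H$ of $G$ with $F(G)\leq H$, one has $\langle H, F(G)\rangle = H$, and every subgroup is (trivially) subnormal in itself. Hence $H$ is subnormal in $\langle H, F(G)\rangle$, which is exactly the definition of an $F(G)$-subnormal subgroup. Applying this observation to $H=A$, $H=B$, and $H=C$ (each of which contains $F(G)$ by hypothesis) shows that $A$, $B$, $C$ are all $F(G)$-subnormal supersoluble subgroups of $G$.

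Next, since by assumption the indexes $|G:A|$, $|G:B|$, $|G:C|$ are pairwise coprime, the hypotheses of Theorem 4.8 are satisfied. Invoking that theorem directly yields that $G$ is supersoluble, which is the desired conclusion.

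There is essentially no obstacle: the entire content is already packed into Theorem 4.8, and the corollary is a specialization obtained by replacing the (weaker) assumption of $F(G)$-subnormality with the (stronger, easier to state) assumption that each of $A$, $B$, $C$ actually contains $F(G)$. The only thing worth flagging is that the proof is a one-line reduction, so the corollary should be written as such rather than reproving Theorem 4.8.
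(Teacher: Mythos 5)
Your proposal is correct and matches the paper's intent exactly: the corollary is stated without proof precisely because, as you observe, $F(G)\leq H$ gives $\langle H, F(G)\rangle = H$, so $A$, $B$, $C$ are trivially $F(G)$-subnormal and Theorem 4.8 applies directly. Nothing further is needed.
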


\section{Final remarks}

Let $\mathfrak{F}$ be an $N_0$-closed class of groups and $1\in\mathfrak{F}$. Then there is
 the maximal
 normal $\mathfrak{F}$-subgroup $G_\mathfrak{F}$ in any group $G$. In the context of our work
 the following
 general problem appears: study all $N_0$-closed classes (formations, Fitting classes, Shunck classes)
 $\mathfrak{F}$ with 1 for which one of the following statements holds:

(1) $F(G)\subseteq G_\mathfrak{F}\subseteq F^*(G)$ for any group $G$;

(2) $F^*(G)\subseteq G_\mathfrak{F}\subseteq \tilde{F}(G)$ for any group $G$;

(3) $F(G)\subseteq G_\mathfrak{F}\subseteq \tilde{F}(G)$ for any group $G$.

 The motivation of this problem is the following theorem.

\begin{thm} Let $\mathfrak{F}$ be a  $N_0$-closed formation

(1) If $\mathfrak{F}$ is saturated and $F(G)\subseteq G_\mathfrak{F}\subseteq \tilde{F}(G)$ for any
 group
 $G$ then $\mathfrak{F}=\mathfrak{N}$.

(2) If $F^*(G)\subseteq G_\mathfrak{F}\subseteq \tilde{F}(G)$ for any group $G$ then
 $\mathfrak{F}=\mathfrak{N}^*$.

\end{thm}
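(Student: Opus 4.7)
My plan is to prove both parts by a two-step scheme. The inclusion $\mathfrak{N}\subseteq\mathfrak{F}$ in (1) (resp.\ $\mathfrak{N}^*\subseteq\mathfrak{F}$ in (2)) is immediate from the lower bound: a nilpotent (resp.\ quasinilpotent) $G$ has $G=F(G)$ (resp.\ $G=F^*(G)$), hence $G\subseteq G_\mathfrak{F}\subseteq G$, so $G\in\mathfrak{F}$. The reverse inclusion in each case is where the upper bound $G_\mathfrak{F}\subseteq\tilde{F}(G)$ does the real work.

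For (1) I would take a minimal-order counterexample $G\in\mathfrak{F}\setminus\mathfrak{N}$. Formation closure plus minimality make every proper quotient of $G$ nilpotent, and since $\mathfrak{N}$ is saturated this rules out $\Phi(G)\ne 1$. With $\Phi(G)=1$ the upper bound gives $G=G_\mathfrak{F}\subseteq\tilde{F}(G)=Soc(G)$, so $G$ is a direct product of its minimal normal subgroups; being non-nilpotent, it has at least one non-abelian one, and a simple direct factor $S$ thereof is a quotient of $G$, so $S\in\mathfrak{F}$. I would then build $H=V\rtimes S$ for $V$ a faithful irreducible $\mathbb{F}_pS$-module with $p\mid|S|$. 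Using the Gaschutz--Lubeseder--Schmid local-function description of the saturated formation $\mathfrak{F}$, the only chief factors of $H$ are $V$ and $H/V\cong S$, both with centralizer quotient $S$; since $S\in\mathfrak{F}$ places $S$ in $f(q)$ for every $q\mid|S|$, in particular at the chosen $p$, one gets $H\in\mathfrak{F}$. But $\Phi(H)=1$ and $V$ is the unique minimal normal subgroup of $H$, so $\tilde{F}(H)=V\subsetneq H=H_\mathfrak{F}$, contradicting the upper bound.

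For (2) the same scheme applied to $G\in\mathfrak{F}$ gives $\tilde{F}(G)=G$; applied to $G/\Phi(G)\in\mathfrak{F}$ and combined with Proposition~3.2(3) it yields $G/\Phi(G)\in\mathfrak{N}^*$. In a minimal counterexample $G\in\mathfrak{F}\setminus\mathfrak{N}^*$ every proper quotient is quasinilpotent, and by $R_0$-closure of $\mathfrak{N}^*$ the group $G$ has a unique minimal normal subgroup $N$. If $N\not\subseteq\Phi(G)$, the uniqueness of $N$ forces $\Phi(G)=1$ and $G=Soc(G)=N$ is characteristically simple, hence quasinilpotent---a contradiction. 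The remaining case $N\subseteq\Phi(G)$, with $N$ elementary abelian, is the main obstacle: $\mathfrak{N}^*$ is not saturated (the Gaschutz-style group $R$ of Section~4, with $R/\Phi(R)\cong A_5\in\mathfrak{N}^*$ and $F^*(R)=\Phi(R)\ne R$, witnesses this), so the Frattini reduction used in (1) is unavailable. My plan is to exploit that the hypothesis is assumed for \emph{all} finite groups, not only those in $\mathfrak{F}$: from $G$ I would build an auxiliary overgroup $H$---naturally the split companion $H=N\rtimes(G/N)$ carrying the same action, or a Gaschutz-type extension having $G$ as quotient---in which formation and $N_0$-closure of $\mathfrak{F}$ force $H_\mathfrak{F}$ to strictly exceed $\tilde{F}(H)$, violating the upper bound. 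Pinning down the correct $H$ and certifying its $\mathfrak{F}$-membership is the delicate step that closes the induction.
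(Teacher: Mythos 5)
Your argument for part (1) is correct but takes a genuinely different route from the paper's. The paper reduces the minimal counterexample to a nonabelian simple group $G$ (via uniqueness of the minimal normal subgroup), then invokes Griess's theorem on the Frattini module to produce a Frattini extension $A\rightarrowtail E\twoheadrightarrow G$ with $E/\Phi(E)\simeq G$, gets $E\in\mathfrak{F}$ from saturation, and certifies $H=\Phi(E)\leftthreetimes (E/\Phi(E))\in\mathfrak{F}$ by the Barnes--Kegel theorem, reaching the same contradiction $\tilde{F}(H)\simeq\Phi(E)<H=H_\mathfrak{F}$ that you reach. You instead extract a simple direct factor $S$ of $G=Soc(G)$ (legitimate: it is a quotient of $G$, and formations are quotient-closed) and certify $H=V\leftthreetimes S\in\mathfrak{F}$ directly from the Gaschutz--Lubeseder--Schmid local definition: applying that definition to the chief factor $S/1$ of $S\in\mathfrak{F}$ does yield $S\in f(q)$ for every $q$ dividing $|S|$; both chief factors $V$ and $H/V$ of $H$ have centralizer quotient $S$ (by faithfulness of $V$ and $Z(S)=1$); and your choice $p\mid |S|$ is exactly what makes $S\in f(p)$ available. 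This trades the Frattini-module machinery and Barnes--Kegel for the full local characterization of saturated formations; both are sound, and yours is arguably more self-contained. (One routine point is left implicit: a nontrivial irreducible $\mathbb{F}_pS$-module exists for every prime $p$, and for simple $S$ any nontrivial irreducible is automatically faithful.)

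Part (2), however, has a genuine gap, and you locate it yourself: ``pinning down the correct $H$ and certifying its $\mathfrak{F}$-membership'' is the entire content of the paper's argument, not a loose end. Two ideas are missing. First, before any construction, the paper proves $C_G(N)\neq G$: if $C_G(N)=G$, then $G$ acts as inner automorphisms on $N/1$ and, since $G/N\in\mathfrak{N}^*$ by minimality, on every chief factor of $G/N$, whence $G\in\mathfrak{N}^*$ by the definition of quasinilpotence --- a contradiction. Second, the correct companion group is $H=N\leftthreetimes (G/C_G(N))$, not your candidate $N\leftthreetimes (G/N)$, and its $\mathfrak{F}$-membership is certified by the Barnes--Kegel theorem (reference [BK] of the paper): for a formation $\mathfrak{F}$, $G\in\mathfrak{F}$ and a chief factor $N$ of $G$ imply $N\leftthreetimes (G/C_G(N))\in\mathfrak{F}$. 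No such result covers $N\leftthreetimes (G/N)$; moreover, in that group the image of $C_G(N)/N$ centralizes $N$ while meeting it trivially, so $N$ need not be the unique minimal normal subgroup and the intended contradiction can evaporate. With $C_G(N)\neq G$ and the action faithful and irreducible, $N$ is the unique minimal normal subgroup of $H$ and $\Phi(H)=1$, so $\tilde{F}(H)=Soc(H)=N<H=H_\mathfrak{F}$, violating the upper bound. Your instinct that Frattini-type reductions are unavailable because $\mathfrak{N}^*$ is not saturated is right; but the missing ingredient is Barnes--Kegel together with the preliminary reduction $C_G(N)\neq G$, and note that $N_0$-closure plays no role at this step --- only the formation properties do.
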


\begin{proof}

Let prove (1).
 From $F(G)\subseteq G_\mathfrak{F}$ for every group $G$ it follows that
 $\mathfrak{N}\subseteq \mathfrak{F}$.
  Assume that $\mathfrak{F}\setminus\mathfrak{N}\neq\emptyset$. Let us choose a minimal order
 group $G$
 from $\mathfrak{F}\setminus\mathfrak{N}$. Since $\mathfrak{F}$ and $\mathfrak{N}$ are both
 saturated
 formations, from the choice of $G$ we may assume that $\Phi(G)=1$ and there is only one
 minimal normal
 subgroup of $G$. From $G_\mathfrak{F}\subseteq \tilde{F}(G)$ it follows that $G=Soc(G)$ is
 nonabelian simple
 group. From \cite{20} it follows that for a prime $p$ dividing $|G|$ there there exist a faithful
 $\mathbb{F}_pG$-module $A$ admitting a group extension $A\rightarrow E\twoheadrightarrow G$ with
 $A\stackrel {G}{\simeq} \Phi(E)$ and $E/\Phi(E)\simeq G$. So $E\in\mathfrak{F}$ and $\Phi(E)/1$
 is the normal section  of $E$. According to \cite{BK} we see that
  $H=\Phi(E)\leftthreetimes (E/\Phi(E))\in\mathfrak{F}$. Note
 that $\tilde{F}(H)\simeq\Phi(E)<H=H_\mathfrak{F}$, a contradiction.
 Thus $\mathfrak{N}=\mathfrak{F}$.

Let prove (2).
 From $F^*(G)\subseteq G_\mathfrak{F}$ for every group $G$ it follows that
 $\mathfrak{N}^*\subseteq \mathfrak{F}$.  Assume that
 $\mathfrak{F}\setminus\mathfrak{N}^*\neq\emptyset$.
 Let us choose a minimal order group $G$ from $\mathfrak{F}\setminus\mathfrak{N}^*$.
 It is clear
 that $G=G_\mathfrak{F}=\tilde{F}(G)$.  Since $\mathfrak{F}$ and $\mathfrak{N}^*$
 are both  formations,
 from the choice of $G$  there is only one minimal normal subgroup $N$  in $G$.
 If $\Phi(G)=1$
 then $G=Soc(G)\in\mathfrak{N}^*$, a contradiction. So $N\leq\Phi(G)$.
Now $N$ is normal elementary abelian $p$-subgroup $G$.  By our assumption $G/N\in\mathfrak{N}^*$.
 Assume
 that $C_G(N)=G$. Now $G$ acts as inner automorphisms on $N/1$ and on every chief factor of $G/N$.
 By definition of quasinilpotent groups $G\in\mathfrak{N}^*$, a contradiction. Hence $C_G(N)\neq G$.
 Note that $N$ is the unique minimal subgroup of  $H=N\leftthreetimes (G/C_G(N))\in\mathfrak{F}$ by
 \cite{BK} and $\Phi(H)=1$. So $\tilde{F}(G)=N$ and $H_\mathfrak{F}=H$, a contradiction.
 Thus $\mathfrak{N}^*=\mathfrak{F}$.
\end{proof}

\textbf{Problem 3.} Describe all Fitting classes $\mathfrak{F}$ for which one of the following holds.

(1) $F(G)\subseteq G_\mathfrak{F}\subseteq F^*(G)$ for any group $G$;

(2) $F^*(G)\subseteq G_\mathfrak{F}\subseteq \tilde{F}(G)$ for any group $G$;

(3) $F(G)\subseteq G_\mathfrak{F}\subseteq \tilde{F}(G)$ for any group $G$.

 In \cite{F1} Forster introduced a class
 $\widehat{\mathfrak{N}^*}=E_\Phi\mathfrak{N}^*=(G|\tilde{F}(G)=G)$
 and showed that $\widehat{\mathfrak{N}^*}$ is $N_0$-closed Shunck class that
 is neither formation nor
 $s_n$-closed. Note that $\widehat{\mathfrak{N}^*}=(G|\tilde{F}^\infty(G)=G)$.

 \begin{prop} Let $G$ be a group. Then $G_{\widehat{\mathfrak{N}^*}}=\tilde{F}^\infty(G)$,
 i.e.
 $\tilde{F}^\infty(G)$ is the maximal among normal subgroups $N$  of $G$ such
 that $N/\Phi(N)$ is
 quasinilpotent.\end{prop}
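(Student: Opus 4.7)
The plan is to verify the two inclusions separately: first that $\tilde{F}^\infty(G)$ is itself a normal subgroup of $G$ belonging to $\widehat{\mathfrak{N}^*}$, and second that any normal $\widehat{\mathfrak{N}^*}$-subgroup of $G$ is contained in $\tilde{F}^\infty(G)$. Once both directions are in hand, the equivalence with the condition ``$N/\Phi(N)$ is quasinilpotent'' is just the unpacking of the definition of $\widehat{\mathfrak{N}^*}$ given in the paragraph above.

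For the first inclusion, I would argue by induction that each $\tilde{F}^n(G)$ is a characteristic subgroup of $G$, since $\tilde{F}$ of a group is characteristic in that group and characteristicity is transitive. The stabilizing subgroup $\tilde{F}^\infty(G)$ is therefore characteristic in $G$, hence normal. By the very construction of the chain, at the stabilization index $i$ we have $\tilde{F}^\infty(G) = \tilde{F}^{i}(G) = \tilde{F}(\tilde{F}^{i-1}(G)) = \tilde{F}(\tilde{F}^\infty(G))$, which places $\tilde{F}^\infty(G)$ in $\widehat{\mathfrak{N}^*}$.

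For the maximality direction, let $N \trianglelefteq G$ with $N \in \widehat{\mathfrak{N}^*}$, i.e.\ $\tilde{F}(N) = N$. A straightforward induction using this equality yields $\tilde{F}^n(N) = N$ for every $n$, so in particular $\tilde{F}^\infty(N) = N$. On the other hand Proposition 3.2(5) gives $\tilde{F}^n(N) \leq \tilde{F}^n(G)$ for each $n$, and taking $n$ past the stabilization index for both chains yields $N = \tilde{F}^\infty(N) \leq \tilde{F}^\infty(G)$. Combining the two directions, $\tilde{F}^\infty(G)$ is the largest normal subgroup of $G$ lying in $\widehat{\mathfrak{N}^*}$, which is exactly $G_{\widehat{\mathfrak{N}^*}}$.

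I do not expect a serious obstacle here; the argument is almost a bookkeeping exercise once the iteration-compatible inclusion $\tilde{F}^n(N) \leq \tilde{F}^n(G)$ from Proposition 3.2 is available. The only point that deserves care is making sure the stabilization of the chain for $N$ is compatible with that for $G$, but since both chains are descending in finite groups one may simply choose $n$ large enough to stabilize both simultaneously.
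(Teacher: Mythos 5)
Your proof is correct and takes essentially the same route as the paper: both directions rest on the fixed-point property $\tilde{F}(\tilde{F}^\infty(G))=\tilde{F}^\infty(G)$ (giving $\tilde{F}^\infty(G)\leq G_{\widehat{\mathfrak{N}^*}}$) together with the monotonicity of $\tilde{F}^n$ on normal subgroups, which you cite as Proposition 3.2(5) and which the paper re-derives for the radical itself via $\Phi(G_{\widehat{\mathfrak{N}^*}})\leq\Phi(G)$ before inducting. Your packaging through an arbitrary normal $\widehat{\mathfrak{N}^*}$-subgroup is a slightly more explicit rendering of the paper's induction, not a different argument.
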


\begin{proof} From $G_{\widehat{\mathfrak{N}^*}}/\Phi(G_{\widehat{\mathfrak{N}^*}})\in\mathfrak{N}^*$
 and $\Phi(G_{\widehat{\mathfrak{N}^*}})\leq\Phi(G)$ it follows that
 $G_{\widehat{\mathfrak{N}^*}}\leq\tilde{F}(G)$. By induction
 $G_{\widehat{\mathfrak{N}^*}}=\tilde{F}(G_{\widehat{\mathfrak{N}^*}})\leq \tilde{F}^\infty(G)$.
 By proposition 3.2 and the definition of $\widehat{\mathfrak{N}^*}$ we obtain
 $G_{\widehat{\mathfrak{N}^*}}=\tilde{F}^\infty(G)$. \end{proof}

Now $F^*(G)\subseteq G_{\widehat{\mathfrak{N}^*}}\subseteq \tilde{F}(G)$. So we can ask

\textbf{Problem 4.} Describe all $N_0$-closed Shunck classes $\mathfrak{F}$ for which one of
 the following holds.

(1) $F(G)\subseteq G_\mathfrak{F}\subseteq F^*(G)$ for any group $G$;

(2) $F^*(G)\subseteq G_\mathfrak{F}\subseteq \tilde{F}(G)$ for any group $G$;

(3) $F(G)\subseteq G_\mathfrak{F}\subseteq \tilde{F}(G)$ for any group $G$.

\end{document}